\setlist{nosep}
\newtheorem{theorem}{Theorem}[section]
\newtheorem{corollary}[theorem]{Corollary}
\newtheorem{proposition}[theorem]{Proposition}
\newtheorem{remark}[theorem]{Remark}
      \theoremstyle{plain}
\newcommand{\subscr}[2]{#1_{\textup{#2}}}
\newcommand{\R}{\mathbb{R}}
\newcommand\norm[1]{\left\lVert#1\right\rVert}
\renewcommand{\norm}[1]{\|#1\|}
\newcommand{\inpr}[2]{\langle#1,#2\rangle}
\DeclareSymbolFont{bbold}{U}{bbold}{m}{n}
\DeclareSymbolFontAlphabet{\mathbbold}{bbold}
\newcommand{\ones}{1}
\newcommand{\zeros}{0}
\newcommand{\SuL}{\mathcal{S}^{1}_{\mu,L}}
\newcommand{\fAcc}{\subscr{F}{Acc}}
\newcounter{saveenum}
\title{A Contraction Theory Approach to Optimization Algorithms from Acceleration Flows}
\author{Pedro Cisneros-Velarde, Francesco Bullo
  \thanks{Pedro Cisneros-Velarde (pacisne@gmail.com) [corresponding author] is with the University of Illinois at Urbana-Champaign and Francesco Bullo (bullo@ucsb.edu) is with the 
  %Center for
%    Control, Dynamical Systems and Computation, 
    University of California,
    Santa Barbara.}
    }
\date{}
\begin{document}
\maketitle

\begin{abstract}
Much recent interest has focused on the design of optimization algorithms
from the discretization of an associated optimization flow, i.e., a system
of differential equations (ODEs) whose trajectories solve an associated
optimization problem.
Such a design approach poses an important problem: how to find a principled
methodology to design and discretize appropriate ODEs.
This paper aims to provide a solution to this problem through the use of
contraction theory. 
We first introduce general mathematical results that explain how
contraction theory guarantees the stability of the implicit and explicit
Euler integration methods.
Then, we propose a novel system of ODEs, namely the
Accelerated-Contracting-Nesterov flow, and use contraction theory to
establish it is an optimization flow with exponential convergence rate,
from which the linear convergence rate of its associated optimization
algorithm is immediately established. Remarkably, a simple explicit Euler
discretization of this flow corresponds to the Nesterov acceleration
method.
Finally, we present how our approach leads to performance guarantees in the
design of optimization algorithms for time-varying optimization problems.
\end{abstract}

\section{Introduction}

\paragraph*{Problem statement and motivation}

There has been a recent interest in studying systems of ODEs that solve an optimization problem --– also known as \emph{optimization flows} --– with the understanding that their study can lead to the analysis and design of discrete-time solvers of optimization problems – also known as \emph{optimization algorithms}. This interest is motivated by the fact that analyzing a system of ODEs can be much simpler than analyzing a discrete system. 
Indeed, the ambitious goal of this research area %objective in any approach 
is to find a “general theory mapping properties of ODEs into corresponding properties for discrete updates" --- as quoted from the seminal work~\citep{WS-SB-EJC:16}. Our paper aims to provide a solution to this problem. 
Ideally, the desired pipeline is to first design an optimization flow --– using all the machinery of dynamical systems analysis --– with good stability and convergence properties, and then formulate a principled way of guaranteeing such good properties translate to its associated optimization algorithm through discretization. %directly from an associated system of ODEs or optimization flow.

A first problem in the %relevant 
literature 
is that the analysis of the optimization algorithm is commonly done separately or independently from the analysis of its associated optimization flow (e.g., see~\citep{AW-ACW-MIJ:16,ACW-BR-MIJ:18,BS-SSD-MIJ-WJS:18,JZ-AM-SS-AJ:18,BS-SSD-WS-MIJ:19,MM-MJ:19}), instead of the former analysis following directly as a consequence of the latter one. 
For example, separate Lyapunov analyses have been made for optimization flows and their associated algorithms.
This problem diminishes one of the very first motivations of analyzing a system of ODEs, namely, that its analysis should directly   
establish 
properties 
of its associated discretization. 

Moreover, it is highly important to formulate an optimization flow with good convergence properties so that its discretization may also lead to an 
optimization algorithm maintaining such good convergence properties,
without becoming 
numerically unstable~\citep{AW-ACW-MIJ:16,BS-SSD-WS-MIJ:19}. A long-standing objective is to find \emph{rate-matching} discretizations, i.e., discretizations that preserve the good convergence rate performance from the optimization flow to the optimization algorithm. 

Our paper aims to propose a solution to the aforementioned two problems by using \emph{contraction theory} and by proposing the Accelerated-Contracting-Nesterov optimization flow, respectively.  
Contraction theory provides a principled approach for directly establishing the convergence properties of the optimization algorithm directly from the analysis of its associated optimization flow. For example, our approach 
directly translates the exponential convergence rate of the proposed optimization flow to a linear convergence rate for the optimization algorithm.%, thus establishing rate-matching.

We also mention that, to the best of our knowledge, no previous work in the literature relevant to the design of (discrete-time) optimization algorithms from optimization flows has considered the case where the objective function associated to the optimization problem is \emph{time-varying}, i.e., where the optimal solutions vary through time and form a trajectory. In this setting, the objective 
%in time-varying optimization 
is to design an optimization flow %and subsequently an optimization 
and 
optimization algorithms that are able to track the time-varying optimal solution up to some bounded error. % in real time. 
In this paper, we use contraction theory to directly establish the tracking performance of our proposed optimization flow and its associated optimization algorithms. Time-varying optimization has found diverse applications in machine learning, signal processing, robotics, etc.; e.g., see~\citep{AS-EDA-SP-GL-GBG:20} and the references therein. 
We must mention the existence of \emph{prediction-correction methods}~\citep{NB-AS-RC:20} which consist  on sampling a continuous-time time-varying optimization problem on fixed intervals of time and then finding approximate solutions to the full optimization problem within the sampling intervals. These methods are not within our problem's scope because we are interested in obtaining optimization algorithms whose dynamics directly track the solution trajectory of the problem.

\paragraph*{Literature review}

There is a large %amount of 
growing literature on the analysis and design of optimization algorithms based on ODEs, so we limit ourselves to mentioning some representative works. 
The seminal works~\citep{WS-SB-EJC:16,BS-SSD-MIJ-WJS:18} propose systems of ODEs based on continuous approximations or versions of the Heavy-ball and the Nesterov acceleration methods, providing a thorough analysis of these optimization flows with the purpose of gaining more insights about the momentum or acceleration phenomenon in their discrete-time counterparts. 
Examples of works that %follow the approach of designing 
design optimization algorithms by direct discretization of optimization flows include: the use of optimization flows derived from variational approaches~\citep{AW-ACW-MIJ:16}, the use of both explicit and implicit Euler discretizations~\citep{ACW-BR-MIJ:18,BS-SSD-WS-MIJ:19}, the use of Runge-Kutta integrators~\citep{JZ-AM-SS-AJ:18}, the use of semi-implicit Euler integration~\cite{MM-MJ:19}, the use of opportunistic state triggering~\citep{MV-JC:19}, the use of symplectic methods~\citep{GF-JS-DPR-RV:20,BS-SSD-WS-MIJ:19}. 
Most of the previous works are based on Lyapunov analysis, as well as on the discretization of the 
ODEs  
proposed in the works~\citep{WS-SB-EJC:16,BS-SSD-MIJ-WJS:18} or a generalization of them.

To the best of our knowledge, 
only the approaches based on symplectic integration and opportunistic state triggering allows for a principled way of discretizing optimization flows 
while preserving good convergence properties 
%with rate-matching 
and without the need to do an independent analysis of the discretizations. In this way, these works are closer in spirit to the approach in this paper;
however, in contrast to them, our paper uses the classic 
implicit and explicit Euler integration methods.

Contraction theory is a mathematical tool that analyzes the \emph{incremental stability} of dynamical systems, i.e, whether two different trajectories of a dynamical system converge or at least do not diverge towards each other, and has a long history of development in the control theory community%, where it has been used as a tool to analyze incremental stability
~\citep{WAC:1965,MV:02,WL-JJES:98,IRM-JJES:17b}. An introduction and survey can be found in~\citep{ZA-EDS:14b}. 
Contraction theory has been used in the analysis of optimization flows associated to certain classes of constrained optimization problems~\citep{HDN-TLV-KT-JJS:18,PCV-SJ-FB:19r}, distributed systems~\citep{NMB-JJES:20}, and connections with gradient flows has been made~\citep{PMW-JJES:20}. Parallel to this development, an important concept that implies contraction, known as the one-sided Lipschitz condition, has been used in numerical analysis% for the analysis of error estimates in integration algorithms
~\citep{EH-SPN-GW:93}. Contraction theory has recently found applications in neural networks~\citep{MR-IM:20}.

\paragraph*{Contributions}

Our contributions are as follows:
\begin{itemize}
\item We establish how the contraction analysis of a dynamical system immediately characterizes the stability of its associated implicit and explicit Euler integrations. We consider the case where the dynamical system is contracting and time-varying without necessarily possessing the same equilibrium point at all times; the time-invariant case follows immediately. In the case of the implicit Euler method, compared to the work~\citep{CAD-HH:72}, our results: 1) do not assume the vector field to be continuously differentiable; 2) consider the case where the contraction rate can be time-varying; and 3) use the one-sided Lipschitz condition. 
In the case of the explicit Euler method, our results generalize the classic forward step method from the theory of monotone operators~\citep{EKR-SB:16} because we consider the case of time-varying contraction rates.
\end{itemize}
For the rest of contributions, we remark that we consider a differentiable, strongly convex objective function with Lipschitz smoothness %gradients 
--- a commonly used class of functions %setting 
for the theoretical analysis of 
optimization flows 
%different system of ODEs that are inspired from acceleration methods 
and their discretization schemes, e.g.,~\citep{WS-SB-EJC:16,ACW-BR-MIJ:18,BS-SSD-MIJ-WJS:18,MV-JC:19,BS-SSD-WS-MIJ:19,MM-MJ:19}, and even for other state-of-the-art solvers for time-varying optimization, e.g.,~\citep{NB-AS-RC:20}. 
\begin{itemize}
\item We formulate a system of ODEs called the \emph{ACcelerated-COntracting-NESTerov} (ACCONEST) flow, and we prove it is an optimization flow with exponential convergence using contraction theory, from which we directly prove that both its implicit and explicit Euler integrations have linear convergence. 
For the implicit integration, rate-matching and acceleration is guaranteed. For the appropriate integration step, the explicit Euler integration of this system is the Nesterov acceleration method and thus has rate-matching and acceleration too.
\item Our contraction analysis for the ACCONEST flow implies the
  existence of a simple quadratic Lyapunov function for the optimization
  flow, as opposed to other more complex Lyapunov functions or energy
  functionals used in previous works, e.g.,
  see~\citep{WS-SB-EJC:16,BS-SSD-WS-MIJ:19}. Proving that the
    optimization flow is contracting provides additional robustness results
    that a Lyapunov analysis does not generally imply, such as strong
    input-to-state stability guarantees and finite input-state
    gains~\citep{AD-SJ-FB:20o}. Moreover, a contracting system is guaranteed
    to have fast correction after transient perturbations to the trajectory
    of the solution, since initial conditions are forgotten.
\item Finally, we extend our analysis to the case of time-varying optimization, i.e., where the objective function varies through time. 
Contraction theory provides guarantees for the tracking of the time-varying optimization solution based upon the analysis done for the time-invariant case. We show that the ACCONEST flow and its discretizations have a tracking error that is uniformly ultimately bounded.
\end{itemize}

\paragraph*{Paper organization} 
Section~\ref{sec:prelim} has notation and preliminary concepts. Section~\ref{sec:contr-th} has results on contraction theory and stability of discretizations. Section~\ref{sec:opt-fl} uses contraction theory to analyze a proposed optimization flow and its associated optimization algorithms. 
Section~\ref{sec:tv-opt} analyzes the time-varying case. Section~\ref{sec:concl} is the conclusion.

The proofs for the results in Section~\ref{sec:contr-th}, Section~\ref{sec:opt-fl}, and Section~\ref{sec:tv-opt} are found in the Appendix.

\section{Preliminaries and notation}
\label{sec:prelim}

\subsection{Notation and definitions}
Consider $A\in\R^{n\times{n}}$. 
If $A$ only has 
real eigenvalues, let $\lambda_{\min}(A)$ be its minimum eigenvalue and $\lambda_{\max}(A)$ its 
maximum one. Let $I_n$ be the $n\times n$ identity matrix. 
Let $\norm{\cdot}_p$ denote the $\ell_p$-norm, and when the argument of a norm is a matrix, we refer to its respective induced norm. 
The matrix measure associated 
to norm $\norm{\cdot}$ is $\mu(A) = \lim_{h\to 0^{+}}\frac{\|I_n+hA\|-1}{h}$. 
Given invertible $Q\in\R^{n\times n}$, let $\norm{\cdot}_{2,Q}$ be the weighted $\ell_2$-norm $\norm{x}_{2,Q}=\norm{Qx}_2$% for any 
, $x\in\R^n$, and whose associated matrix measure is %given by
$\mu_{2,Q}(A)=\mu_{2}(QAQ^{-1})$.
  
Let $\ones_n$ and
$\zeros_n$ be the all-ones and all-zeros column vector with $n$
entries respectively. 
Given $x_i\in\R^{k_i}$, % for $i\in\until{N}$, 
let $(x_1,\dots,x_N)=\begin{bmatrix}x_1^\top &\dots&x_N^\top\end{bmatrix}$.
% \pc{\in\R^{1\times \sum_{i=1}^N k_i}}$.

Consider a continuously differentiable function $f:\R^n\to\R$. We say $f$ is \emph{$L$-smooth} if $\norm{\nabla f(x)-\nabla f(y)}_2\leq L\norm{x-y}_2$, $L>0$, for any $x,y\in\R^n$; and \emph{$\mu$-strongly convex} if $\mu\norm{x-y}_2^2\leq (\nabla f(x)-\nabla f(y))^\top(x-y)$, $\mu>0$, for any $x,y\in\R^n$. Let $\SuL$ be the set of continuously differentiable functions on $\R^n$ that are $L$-smooth and $\mu$-strongly convex, and denote by $\kappa:=\frac{L}{\mu}$ the condition number of any function belonging to this set.

Consider a dynamical system $\dot{x}=g(x,t)$ with \emph{vector field} $g:\R^n\times [t_0,\infty)\to \R^n$, $x\in\R^n$, $t\geq t_0$. We assume that a solution exists under any initial condition. Let $t\mapsto\phi(t,t_0,x_0)$ be the trajectory of
  the system %~\eqref{eq:system} 
  starting from $x_0\in\R^n$ at time $t_0\geq 0$. We say $g$ is \emph{uniformly $\ell$-Lipschitz continuous} with respect to norm $\norm{\cdot}$ if $\norm{g(x,t)-g(y,t)}\leq \ell\norm{x-y}$, $\ell>0$, for any $x,y\in\R^n$, and for every $t>0$. 
 The \emph{implicit Euler} discretization or method with constant discretization or integration step-size $h>0$ of this dynamical system with $t_0=0$ is: $y_{k+1}=y_k+hg(y_{k+1},(k+1)h)$, $k=0,1,\dots$, with given $y_0\in\R^n$. The \emph{explicit Euler} discretization or method with constant discretization or integration step-size $h>0$ of this dynamical system with $t_0=0$ is: $y_{k+1}=y_k+hg(y_{k},kh)$, $k=0,1,\dots$, with given $y_0\in\R^n$.

\subsection{Review of basic results on contraction theory}

The following results %from contraction theory that 
will be useful throughout the paper; e.g., see references~\citep{GEL-VL:72,AD-SJ-FB:20o}.

\begin{theorem}[Characterizations of contracting systems]
\label{th:contr}
Consider the dynamical system $\dot{x}=g(x,t)$ with $x\in\R^n$. Pick a symmetric positive-definite $P\in\R^{n\times{n}}$ and an integrable function $\bar{\gamma}:[t_0,\infty)\to\R$. Then, the following statements are equivalent% to the system being contracting
\begin{enumerate}
\item\label{it:contr-1} $\norm{\phi(t,t_0,x_0)-\phi(t,t_0,y_0)}_{2,P^{1/2}}\leq e^{-\int_{t_0}^t \bar{\gamma}(s)ds}\norm{x_0-y_0}_{2,P^{1/2}}$;
\item\label{it:contr-2} the \emph{one-sided Lipschitz condition} $(y-x)^\top P(g(x,t)-g(y,t))\leq -\bar{\gamma}(t)\norm{x-y}_{2,P^{1/2}}^2$ holds for every $x,y\in\R^n$ and $t\geq t_0$.
\setcounter{saveenum}{\value{enumi}}
\end{enumerate} 
If $f$ is continuously differentiable, then the previous statements are also equivalent to
\begin{enumerate}\setcounter{enumi}{\value{saveenum}}
\item\label{it:contr-3} $\mu_{2,P^{1/2}}\left(Dg(x,t)\right)\leq -\bar{\gamma}(t)$ for every $x\in\R^n$ and $t\geq t_0$; where $D(\cdot,t)$ is the Jacobian of $g(\cdot,t)$.
\end{enumerate}
\end{theorem}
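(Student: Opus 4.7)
}

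The plan is to establish the chain $(ii) \Leftrightarrow (i)$ unconditionally, and then $(ii) \Leftrightarrow (iii)$ under the continuous differentiability hypothesis. The backbone for the first equivalence is a standard Gronwall/Lyapunov calculation using the squared weighted distance $V(t) = (\phi_x(t)-\phi_y(t))^\top P(\phi_x(t)-\phi_y(t))$, where $\phi_x(t)=\phi(t,t_0,x_0)$ and $\phi_y(t)=\phi(t,t_0,y_0)$. For the second equivalence the core identity is that, with $Q=P^{1/2}$, the matrix measure $\mu_{2,Q}(A)\le -\bar{\gamma}(t)$ is equivalent to the LMI $PA+A^\top P \preceq -2\bar{\gamma}(t)P$, obtained by conjugating the standard characterization $\mu_2(B)\le -\bar{\gamma} \iff B+B^\top \preceq -2\bar{\gamma} I$ by $Q$.

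For $(ii)\Rightarrow(i)$, I differentiate $V$ along the flow: $\dot V(t) = 2(\phi_x-\phi_y)^\top P\bigl(g(\phi_x,t)-g(\phi_y,t)\bigr)\le -2\bar{\gamma}(t) V(t)$ using (ii) applied with $x=\phi_x(t),\ y=\phi_y(t)$ (and appropriate sign convention). Gronwall's inequality then gives $V(t)\le e^{-2\int_{t_0}^t \bar{\gamma}(s)ds}V(t_0)$, and taking square roots produces (i). For $(i)\Rightarrow(ii)$, fix $x,y\in\R^n$ and $t\ge t_0$, and consider trajectories with $\phi_x(t)=x$, $\phi_y(t)=y$ starting at time $t$. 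The function $\tau\mapsto \|\phi_x(t+\tau)-\phi_y(t+\tau)\|_{2,P^{1/2}}^2$ is bounded above by $e^{-2\int_t^{t+\tau}\bar{\gamma}(s)ds}\|x-y\|_{2,P^{1/2}}^2$; both sides agree at $\tau=0$, so comparing right derivatives at $\tau=0^+$ (using continuity of $g$ in $t$, noting only integrability is needed for $\bar\gamma$ via the Lebesgue differentiation theorem) yields the pointwise inequality~(ii) at $t$.

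For the equivalence with (iii), assume $g$ is $C^1$. For $(iii)\Rightarrow(ii)$, write $g(x,t)-g(y,t)=\int_0^1 Dg(y+s(x-y),t)(x-y)\,ds$ by the fundamental theorem of calculus. Taking the inner product with $P(x-y)$ and applying the LMI form of (iii) inside the integral, i.e., $(x-y)^\top P\, Dg(\cdot,t)(x-y) \le -\bar{\gamma}(t)(x-y)^\top P(x-y)$, recovers (ii) after integration in $s$. For $(ii)\Rightarrow(iii)$, fix $x$, $v\in\R^n$, and set $y=x+\epsilon v$; Taylor expand $g(y,t)=g(x,t)+\epsilon Dg(x,t)v+o(\epsilon)$, substitute into (ii), divide by $\epsilon^2$, and let $\epsilon\to 0$ to obtain $v^\top P\, Dg(x,t)\, v \le -\bar{\gamma}(t)\, v^\top P v$ for every $v$, which is the LMI characterization of (iii).

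The main technical obstacle I anticipate is the $(i)\Rightarrow(ii)$ step under only measurability/integrability of $\bar{\gamma}$: the derivative of $\int_t^{t+\tau}\bar{\gamma}(s)ds$ at $\tau=0$ exists for a.e.\ $t$ by Lebesgue differentiation, which produces (ii) a.e.\ in $t$; upgrading this to all $t$ (as stated in (ii)) requires either a continuity assumption on $\bar{\gamma}$ or interpreting the inequality in the a.e.\ sense. Aside from this subtlety, the argument is conceptually routine and hinges on (a) the Gronwall estimate, (b) the standard translation between matrix measures and symmetric-part LMIs, and (c) first-order Taylor expansions.
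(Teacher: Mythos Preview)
The paper does not actually prove Theorem~\ref{th:contr}: it is stated in Section~\ref{sec:prelim} under ``Review of basic results on contraction theory'' as a known preliminary fact, and no argument for it appears in Section~\ref{proofs}. There is therefore nothing in the paper to compare your proposal against.

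That said, your outline is the standard textbook route for these equivalences and is correct: Gronwall on $V(t)=(\phi_x-\phi_y)^\top P(\phi_x-\phi_y)$ for $(ii)\Rightarrow(i)$; differentiating the contraction bound at $\tau=0^+$ for $(i)\Rightarrow(ii)$; the line-segment mean-value integral for $(iii)\Rightarrow(ii)$; and a first-order Taylor expansion in $\epsilon$ for $(ii)\Rightarrow(iii)$, together with the identity $\mu_{2,P^{1/2}}(A)\le-\bar\gamma \iff PA+A^\top P\preceq -2\bar\gamma P$. Your caveat on $(i)\Rightarrow(ii)$ is well placed: with $\bar\gamma$ merely integrable, Lebesgue differentiation delivers the pointwise inequality only for a.e.\ $t$, so the ``for every $t$'' in (ii) tacitly needs either continuity of $\bar\gamma$ or an a.e.\ reading; the paper does not comment on this since the result is treated as background. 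Your parenthetical about the sign convention is also warranted --- as written, the left-hand side of (ii) carries a $(y-x)$ rather than the $(x-y)$ that matches $\dot V$, but the paper's own use of the condition (e.g., in the proof of Theorem~\ref{th:dis-contr}) is consistent with the standard form $(x-y)^\top P(g(x,t)-g(y,t))\le -\bar\gamma(t)\|x-y\|_{2,P^{1/2}}^2$.
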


Any dynamical system that satisfies any of the conditions in Theorem~\ref{th:contr} with $\int_{t_0}^t \bar{\gamma}(s)ds>0$ for any $t\geq t_0$ is said to be contracting (with respect to~$\norm{\cdot}_{2,P^{1/2}}$) or have the \emph{contraction} property, and $\bar{\gamma}(t)$ is the contraction rate. 
The contraction rate is time-invariant when $\bar{\gamma}(t)\equiv \gamma$.

The central and most challenging part of determining if a system is contracting is to find an adequate symmetric positive-definite matrix $P\in\R^{n\times{n}}$ that establishes its contraction 
respect to $\norm{\cdot}_{2,P^{1/2}}$.

\begin{theorem}[Characterization for homogeneous systems]
\label{th:contr2}
Consider the homogeneous or time-invariant dynamical system $\dot{x}=g(x)$ with $x\in\R^n$. Pick a symmetric positive-definite $P\in\R^{n\times{n}}$. If the system is contracting with contraction rate $\gamma>0$ and respect to $\norm{\cdot}_{2,P^{1/2}}$, then
\begin{enumerate}
\item there exists a unique equilibrium point $x^*$;
\item \label{th:contr2-lab1} $x^*$ is exponentially globally stable with Lyapunov functions $V(x)=\norm{x-x^*}_{2,P^{1/2}}^2$ and $V(x)=\norm{g(x)}_{2,P^{1/2}}^2$; and %$V(x)=(x-x^*)^\top P (x-x^*)$;
\item \label{th:contr2-lab2} the exponential convergence rate is $\gamma$.
\end{enumerate}
\end{theorem}

In light of Theorem~\ref{th:contr}, statements~\ref{th:contr2-lab1} and~\ref{th:contr2-lab2} of Theorem~\ref{th:contr2} can be equivalently expressed as $\norm{\phi(t,t_0,x_0)-x^*}\leq e^{-\gamma(t-t_0)}\norm{x_0-x^*}$ for any $x_0\in\R^n$.

\section{Contraction Theory and stability of Euler discretizations}
\label{sec:contr-th}

The next result establishes how the contraction analysis of a dynamical system can immediately characterize the stability of its associated implicit and implicit Euler integration.

\begin{theorem}[Stability of the implicit Euler integration]
\label{th:dis-contr}
Consider the system $\dot{x}=g(x,t)$, $x\in\R^n$, $t\geq 0$, and that it is contracting with contraction rate $\bar{\gamma}(t)>0$ for all $t\geq 0$ and respect to some appropriate norm $\norm{\cdot}_{2,P^{1/2}}$, $P\in\R^{n\times n}$. 
Let $(y_k)$ be a sequence generated by the implicit Euler discretization with constant integration step-size $h>0$. 
%Also, consider the implicit Euler discretization with variable $y_k\in\R^n$. 
\begin{enumerate}
\item\label{it:dis-1} If there exists %some 
$x^*$ such that $g(x^*,t)=\zeros_n$ for all $t\geq 0$, then
\begin{multline}
\label{eq:thcontr-1}
\norm{y_k-x^*}_{2,P^{1/2}}\\\leq \left(\prod^k_{m=1}(1+h\bar{\gamma}(mh))^{-1}\right)\norm{y_0-x^*}_{2,P^{1/2}},%\;\; k=1,2,\dots;
\end{multline} 
for $k=1,2,\dots$; and if additionally the contraction rate is time-invariant $\bar{\gamma}(t)\equiv\gamma>0$, then there is linear convergence
\begin{equation}
\norm{y_k-x^*}_{2,P^{1/2}}\leq (1+h\gamma)^{-k}\norm{y_0-x^*}_{2,P^{1/2}}.%,\;\; k=1,2,\dots.
\end{equation} 
\item\label{it:dis-2} If there exists a curve $t\mapsto x^*(t)$ such that $g(x^*(t),t)=\zeros_n$ for any $t\geq 0$, then
\begin{equation}
\label{eq:thcontr-2}
\begin{aligned}
&\norm{y_k-x^*(kh)}_{2,P^{1/2}}\\&\leq \left(\prod^k_{m=1}(1+h\bar{\gamma}(mh))^{-1}\right)\norm{y_0-x^*(0)}_{2,P^{1/2}}\\
&\quad+\sum^k_{m=1}\prod^k_{r=m}(1+h\bar{\gamma}(rh))^{-1}\\
&\quad\quad\times\norm{x^*(mh)-x^*((m-1)h)}_{2,P^{1/2}},%\;\; k=1,2,\dots;
\end{aligned}
\end{equation}
for $k=1,2,\dots$; and if additionally the contraction rate is time-invariant $\bar{\gamma}(t)\equiv\gamma>0$ and 
$\sup_{k=1,2,\dots}\norm{x^*(kh)-x^*((k-1)h)}_{2,P^{1/2}}\leq\rho$ for some constant $\rho\geq 0$, then
\begin{multline}
\label{eq:thcontr-3}
\norm{y_k-x^*(kh)}_{2,P^{1/2}}\\\leq (1+h\gamma)^{-k}\norm{y_0-x^*(0)}_{2,P^{1/2}}+\rho\sum^k_{m=1}(1+h\gamma)^{-m},%\;\; k=1,2,\dots,
\end{multline} 
and so
\begin{equation}
\label{eq:thcontr-4}
\limsup_{k\to\infty}\norm{y_k-x^*(kh)}_{2,P^{1/2}}\leq \frac{\rho}{h\gamma}.%\quad\text{as }k\to\infty.
\end{equation} 
\end{enumerate} 
\end{theorem}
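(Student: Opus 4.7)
The plan is to reduce the discrete recursion to a one-step $P$-weighted contraction inequality by pairing the implicit Euler update with the one-sided Lipschitz condition (item~\ref{it:contr-2} of Theorem~\ref{th:contr}), then iterating. For part~\ref{it:dis-1}, I start from the defining equation $y_{k+1}-y_k=hg(y_{k+1},(k+1)h)$ and subtract the trivial identity $0=hg(x^*,(k+1)h)$ (using the hypothesis $g(x^*,t)=\zeros_n$) to obtain
\begin{equation*}
(y_{k+1}-x^*)-(y_k-x^*)=h\bigl(g(y_{k+1},(k+1)h)-g(x^*,(k+1)h)\bigr).
\end{equation*}
Taking the $P$-inner product of both sides with $y_{k+1}-x^*$, the right-hand side is bounded above by $-h\bar\gamma((k+1)h)\|y_{k+1}-x^*\|_{2,P^{1/2}}^2$ thanks to the one-sided Lipschitz condition, while on the left I expand $(y_{k+1}-x^*)^\top P (y_k-x^*)$ and use the Cauchy--Schwarz inequality in the $P$-inner product. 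Rearranging yields the clean per-step estimate
\begin{equation*}
(1+h\bar\gamma((k+1)h))\,\|y_{k+1}-x^*\|_{2,P^{1/2}}\le \|y_k-x^*\|_{2,P^{1/2}},
\end{equation*}
and iterating from $k=0$ produces the product bound~\eqref{eq:thcontr-1}. The time-invariant specialization is immediate.

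For part~\ref{it:dis-2}, I repeat the derivation but now compare $y_{k+1}$ with $x^*((k+1)h)$, so after subtracting the identity $0=hg(x^*((k+1)h),(k+1)h)$ an additional drift term $x^*(kh)-x^*((k+1)h)$ appears on the left-hand side. Proceeding as before and applying Cauchy--Schwarz to both the $y_k-x^*(kh)$ and $x^*((k+1)h)-x^*(kh)$ contributions gives the affine recursion
\begin{equation*}
\|y_{k+1}-x^*((k+1)h)\|_{2,P^{1/2}}\le (1+h\bar\gamma((k+1)h))^{-1}\bigl[\|y_k-x^*(kh)\|_{2,P^{1/2}}+\|x^*((k+1)h)-x^*(kh)\|_{2,P^{1/2}}\bigr].
\end{equation*}
Unrolling this recursion yields exactly~\eqref{eq:thcontr-2}. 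Specializing to the time-invariant contraction rate $\gamma$ collapses each product $\prod_{r=m}^{k}(1+h\gamma)^{-1}$ to $(1+h\gamma)^{-(k-m+1)}$ and bounds every step increment by $\rho$, producing~\eqref{eq:thcontr-3}; the ultimate bound~\eqref{eq:thcontr-4} then follows by summing the geometric series $\sum_{m=1}^{\infty}(1+h\gamma)^{-m}=1/(h\gamma)$.

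The main obstacle is obtaining the sharp factor $(1+h\bar\gamma)^{-1}$ rather than the weaker $(1+2h\bar\gamma)^{-1/2}$ that comes out of naively squaring the displacement. The key trick is to pair the \emph{implicit} side $y_{k+1}$ with the $P$-inner product (so the one-sided Lipschitz bound already produces a term proportional to $\|y_{k+1}-x^*\|_{2,P^{1/2}}^2$), and then apply Cauchy--Schwarz only to the cross term. A secondary subtlety in part~\ref{it:dis-2} is introducing the telescoping $x^*(kh)$ inside the recursion so that the tracking error and the trajectory drift appear as separate terms, which is what allows the additive forcing interpretation in~\eqref{eq:thcontr-2} and the ultimate-boundedness conclusion in~\eqref{eq:thcontr-4}.
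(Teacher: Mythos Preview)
Your proposal is correct and follows essentially the same approach as the paper: both arguments take the $P$-inner product of the implicit update with $y_{k+1}-x^*$, apply the one-sided Lipschitz condition to obtain the $-h\bar\gamma((k+1)h)\|y_{k+1}-x^*\|_{2,P^{1/2}}^2$ term, use Cauchy--Schwarz on the cross term, and iterate the resulting one-step bound. The only cosmetic difference is in part~\ref{it:dis-2}: the paper first applies the one-step contraction inequality with $x^*((k+1)h)$ playing the role of the equilibrium and then splits $\|y_k-x^*((k+1)h)\|_{2,P^{1/2}}$ via the triangle inequality, whereas you insert the telescoping $x^*(kh)$ before taking the inner product; both routes land on the same affine recursion and the same unrolled bound.
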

 
\begin{remark}[Comparison with the work~\citep{CAD-HH:72}]
  Statement~\ref{it:dis-1} in Theorem~\ref{th:dis-contr} was proved
  in~\citep{CAD-HH:72} for the case where the vector field of the dynamical system is continuously differentiable and using the condition~\ref{it:contr-3} of Theorem~\ref{th:contr} along with properties of matrix measures. We also remark that Theorem~\ref{th:dis-contr} considers the case where the contraction rate can be time-varying, unlike~\citep{CAD-HH:72}.% only studies the time-invariant case.} 
  % To the best of our knowledge, statements~\ref{thss_1a} and~\ref{thss_2} are novel.
\end{remark}

\begin{remark}[Rate-matching for the Implicit Euler integration]
\label{rem:imE}
Assume the conditions of statement~\ref{it:dis-1} of Theorem~\ref{th:dis-contr}, with the system %being homogeneous and 
having a time-invariant contraction rate $\gamma>0$. 
Now, observe that 
$e^{\gamma h}\geq (1+\gamma h)\implies (1+\gamma h)^{-k}\leq e^{-\gamma hk}$ for $k=1,2\dots$. In order words, the linear convergence of the implicit Euler integration is upper bounded by the exponential convergence of its ODE counterpart for the discretized time $t=kh$ (see statement~\ref{it:contr-1} of Theorem~\ref{th:contr}), i.e., there is rate-matching. 
\end{remark}

\begin{theorem}[Stability of the explicit Euler integration]
\label{th:dis-contr2}
Consider the system $\dot{x}=g(x,t)$, $x\in\R^n$, $t\geq 0$, with $g$ being uniformly $\ell$-Lipschitz continuous, and that it is contracting with contraction rate $\bar{\gamma}(t)$ such that $\inf_{t\geq 0}\bar{\gamma}(t)>0$ and $\ell>\sup_{t\geq 0}\bar{\gamma}(t)$, % for all $t\geq 0$
both respect to some appropriate norm $\norm{\cdot}_{2,P^{1/2}}$, $P\in\R^{n\times n}$. 
% , i.e., the system is contracting with respect to some appropriate norm $\norm{\cdot}_{2,P^{1/2}}$, $P\in\R^{n\times n}$. 
Let $(y_k)$ be a sequence generated by the explicit Euler discretization with constant integration step-size 
$0<h<\frac{2}{\ell^2}\inf_{t\geq 0}\bar{\gamma}(t)$. 
%$0<h<\frac{2}{\ell^2\kappa(P)}\inf_{t\geq 0}\bar{\gamma}(t)$. 
%Also, consider the implicit Euler discretization with variable $y_k\in\R^n$. 
\begin{enumerate}
\item\label{it:dis-12} If there exists $x^*$ such that $g(x^*,t)=\zeros_n$ for all $t\geq 0$, then
\begin{multline}
\label{eq:thcontr-12}
\norm{y_k-x^*}_{2,P^{1/2}}\\\leq \left(\prod^{k-1}_{m=0}(1-2h\bar{\gamma}(mh)+h^2\ell^2)^{1/2}\right)\\\times\norm{y_0-x^*}_{2,P^{1/2}},%\;\; k=1,2,\dots;
\end{multline} 
for $k=1,2,\dots$; and if additionally the contraction rate is time-invariant $\bar{\gamma}(t)\equiv\gamma>0$, then there is linear convergence
\begin{equation}
\label{eq:thcontr-1a2}
\norm{y_k-x^*}_{2,P^{1/2}}\leq (1-2h\gamma+h^2\ell^2)^{k/2}\norm{y_0-x^*}_{2,P^{1/2}}.%,\;\; k=1,2,\dots.
\end{equation}
\item\label{it:dis-22} If there exists a curve $t\mapsto x^*(t)$ such that $g(x^*(t),t)=\zeros_n$ for any $t\geq 0$, then
\begin{multline}
\label{eq:thcontr-22}
\norm{y_k-x^*(kh)}_{2,P^{1/2}}\\\leq \left(\prod^{k-1}_{m=0}(1-2h\bar{\gamma}(mh)+h^2\ell^2)^{1/2}\right)\\\times\norm{y_0-x^*(0)}_{2,P^{1/2}}\\
+1_{\{k>1\}}\sum^{k-1}_{m=1}\prod^{k-1}_{r=m}(1-2h\bar{\gamma}(rh)+h^2\ell^2)^{1/2}\\
\times\norm{x^*(mh)-x^*((m-1)h)}_{2,P^{1/2}}\\
+\norm{x^*(kh)-x^*((k-1)h)}_{2,P^{1/2}},%\;\; k=1,2,\dots;
\end{multline}
for $k=1,2,\dots$; where $1_{\{k>1\}}=1$ if $k>1$ and $1_{\{k>1\}}=0$ otherwise. Additionally, if 
the contraction rate is time-invariant $\bar{\gamma}(t)\equiv\gamma>0$ and 
$\sup_{k=1,2,\dots}\norm{x^*(kh)-x^*((k-1)h)}_{2,P^{1/2}}\leq\rho$ for some constant $\rho\geq0$, then
%\begin{equation}
\begin{multline}
\label{eq:thcontr-32}
%\begin{aligned}
%\norm{y_k-x^*(kh)}_{2,P^{1/2}}\leq& (1-2h\gamma+h^2\ell^2\kappa(P))^{k/2}\norm{y_0-x^*(0)}_{2,P^{1/2}}\\
%&+\rho(1+1_{\{k>1\}}\sum^{k-1}_{m=1}(1-2h\gamma+h^2\ell^2\kappa(P))^{m/2}),\;\; k=1,2,\dots,
%\end{aligned}
%\begin{aligned}
%\norm{y_k-x^*(kh)}_{2,P^{1/2}}\leq& (1-2h\gamma+h^2\ell^2)^{k/2}\norm{y_0-x^*(0)}_{2,P^{1/2}}\\
%&+\rho\sum^{k-1}_{m=0}(1-2h\gamma+h^2\ell^2)^{m/2},%\;\; k=1,2,\dots,
\norm{y_k-x^*(kh)}_{2,P^{1/2}}\\\leq (1-2h\gamma+h^2\ell^2)^{k/2}\norm{y_0-x^*(0)}_{2,P^{1/2}}\\
+\rho\sum^{k-1}_{m=0}(1-2h\gamma+h^2\ell^2)^{m/2},%\;\; k=1,2,\dots,
%\end{aligned}
%\end{equation}
\end{multline}
for $k=1,2,\dots$, and so
\begin{multline}
\label{eq:thcontr-42}
\limsup_{k\to\infty}\norm{y_k-x^*(kh)}_{2,P^{1/2}}\\\leq \frac{\rho}{1-(1-2h\gamma+h^2\ell^2)^{1/2}}.
\end{multline} 
\item\label{it:dis-32} In all the previous cases where the system has contraction rate $\gamma$, the optimal choice for the time-step that gives the fastest convergence rate is 
%$h=\frac{\gamma}{\ell^2\kappa(P)}$; 
$h^*=\frac{\gamma}{\ell^2}$; i.e., in equations~\eqref{eq:thcontr-1a2},~\eqref{eq:thcontr-32} and~\eqref{eq:thcontr-42}, we have $1-2h^*\gamma+{h^*}^2\ell^2= 1-\frac{\gamma^2}{\ell^2}$.
\end{enumerate} 
\end{theorem}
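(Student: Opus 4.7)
The plan is to derive a one-step contraction bound on $\|y_{k+1}-x^*\|_{2,P^{1/2}}^2$ by expanding the squared weighted norm, then iterate. For statement~\ref{it:dis-12}, the explicit Euler update together with $g(x^*,kh)=\zeros_n$ gives
\[
y_{k+1}-x^* = (y_k-x^*) + h\bigl(g(y_k,kh)-g(x^*,kh)\bigr),
\]
and expanding the norm yields a cross term and a quadratic term. I would bound the cross term using the one-sided Lipschitz condition from Theorem~\ref{th:contr}, which contributes at most $-2h\bar{\gamma}(kh)\|y_k-x^*\|_{2,P^{1/2}}^2$, and bound the quadratic term by the uniform $\ell$-Lipschitz continuity of $g$ in $\|\cdot\|_{2,P^{1/2}}$, which contributes at most $h^2\ell^2\|y_k-x^*\|_{2,P^{1/2}}^2$. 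Combining yields the per-step factor $1-2h\bar{\gamma}(kh)+h^2\ell^2$, which lies in $(0,1)$ thanks to the assumptions $0<h<(2/\ell^2)\inf_t\bar{\gamma}(t)$ and $\ell>\sup_t\bar{\gamma}(t)$. Taking square roots and iterating then gives~\eqref{eq:thcontr-12}, and setting $\bar{\gamma}\equiv\gamma$ yields~\eqref{eq:thcontr-1a2}.

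For statement~\ref{it:dis-22}, I would repeat the argument with $x^*(kh)$ in place of $x^*$ to control the frozen-target quantity $\|y_k + h(g(y_k,kh) - g(x^*(kh),kh)) - x^*(kh)\|_{2,P^{1/2}}$, then split
\[
y_{k+1}-x^*((k+1)h) = \bigl(y_k + h(g(y_k,kh)-g(x^*(kh),kh)) - x^*(kh)\bigr) - \bigl(x^*((k+1)h)-x^*(kh)\bigr)
\]
and apply the triangle inequality to obtain
\[
\|y_{k+1}-x^*((k+1)h)\|_{2,P^{1/2}} \leq q_k\|y_k-x^*(kh)\|_{2,P^{1/2}} + \|x^*((k+1)h)-x^*(kh)\|_{2,P^{1/2}},
\]
with $q_k := (1-2h\bar{\gamma}(kh)+h^2\ell^2)^{1/2}$. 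Unrolling this discrete Gronwall-type recursion from index $0$ to $k-1$ produces the convolution-like form of~\eqref{eq:thcontr-22}, with the final drift contribution isolated outside the sum because its accompanying product $\prod_{r=k}^{k-1} q_r$ is empty. For~\eqref{eq:thcontr-32}, setting $q_k\equiv q$ and bounding each drift by $\rho$ produces a geometric sum, and~\eqref{eq:thcontr-42} follows from $\limsup_k q^k = 0$ together with $\sum_{m=0}^\infty q^m = 1/(1-q)$.

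For statement~\ref{it:dis-32}, I would minimize the per-step factor $\psi(h):=1-2h\gamma+h^2\ell^2$ over $h>0$: the derivative $\psi'(h)=-2\gamma+2h\ell^2$ vanishes at $h^*=\gamma/\ell^2$, which sits strictly inside the admissible interval $(0,2\gamma/\ell^2)$, and substitution yields $\psi(h^*)=1-\gamma^2/\ell^2$.

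The main obstacle is the bookkeeping in part~\ref{it:dis-22}: aligning the product and summation indices in~\eqref{eq:thcontr-22} and recognizing that the final drift term sits outside the sum precisely because its associated product of $q_r$ factors is empty. Everything else reduces to standard ingredients --- the one-sided Lipschitz condition, the uniform $\ell$-Lipschitz bound, the triangle inequality, and geometric-series summation.
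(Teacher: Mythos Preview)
Your proposal is correct and follows essentially the same approach as the paper's proof: expand the squared weighted norm of the explicit Euler step, bound the cross term via the one-sided Lipschitz condition and the quadratic term via the uniform $\ell$-Lipschitz property, iterate the resulting one-step inequality, and for part~\ref{it:dis-22} combine the frozen-target contraction with a triangle-inequality split and unroll the recursion. Your treatment of part~\ref{it:dis-32} (minimizing $\psi(h)=1-2h\gamma+h^2\ell^2$) also matches the paper's, and you even make explicit why the per-step factor lies in $(0,1)$, which the paper leaves partly implicit.
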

\begin{remark}[Connection with monotone operator theory]
Consider the homogeneous dynamical system $\dot{x}=g(x)$, $x\in\R^n$, with time-invariant contraction rate $\gamma>0$. If the vector field $g$ is seen as an operator (over its argument), then the one-sided Lipschitz condition in item~\ref{it:contr-2} of Theorem~\ref{th:contr} is equivalent to $-g$ being \emph{strongly monotone} with parameter $\gamma$. 
Therefore, the classic proof for finding zeros of a strongly monotone operator~\citep{EKR-SB:16} (also called the forward step method) follows as a particular case of the proof of item~\ref{it:dis-12} of Theorem~\ref{th:dis-contr2} for the case of time-invariant contraction rates.
\end{remark}

\begin{remark}[Improving the contraction rate for Theorem~\ref{th:dis-contr2}]
\label{rem:contr-Eul-improv}
Consider Theorem~\ref{th:contr} and its assumptions, with the system having a time-invariant contraction rate $\gamma>0$. A closer look to the proof shows that the linear convergence rate depends on the inequality 
\begin{multline}
\label{eq:express-up}
-2h\gamma\norm{y_k-x^*}_{2,P^{1/2}}^2+h^2\norm{g(y_k,kh)}_{2,P^{1/2}}^2\\\leq (-2h\gamma+h^2\ell^2)\norm{y_k-x^*}_{2,P^{1/2}}^2.
\end{multline}
The use of the Lipschitz constant $\ell$ is done in order to provide a general bound for our proof; however, it may be possible to upper bound the left-hand side of~\eqref{eq:express-up} and obtain a less 
conservative result, and thus a better contraction rate. However, this bound will depend on the particular vector field $g$. 
\end{remark}

\section{The Accelerated-Contracting-Nesterov flow and its Euler discretizations}
\label{sec:opt-fl}

Consider an objective function $f:\R^n\to\R$ satisfying $f\in\SuL$. 
The works~\citep{WS-SB-EJC:16,BS-SSD-MIJ-WJS:18} propose the following system of ODEs: 
\begin{equation}
\label{eq:opt-ODE}
\begin{aligned}
\dot{x}_1&=c\,x_2\\
\dot{x}_2&=-a\,x_2-b\nabla f(x_1)
\end{aligned}
\end{equation}
with $x_1,x_2\in\R^n$, 
as an optimization flow related to the heavy-ball method for specific values of the scalars $a,b,c>0$. Then, the work~\citep{MM-MJ:19} proposed to modify~\eqref{eq:opt-ODE} by including a \emph{displaced gradient}:
\begin{equation}
\label{eq:syst-dispgr}
\begin{aligned}
\dot{x}_1&=c\,x_2\\
\dot{x}_2&=-a\,x_2-b\nabla f(x_1+dx_2)
\end{aligned}
\end{equation}
with additional scalar $d>0$. 
The state $x_1$ is the \emph{position} of the system whose trajectory should converge to the minimum of $f$, and $x_2$ is its instantaneous velocity. In this paper, for any fixed $t\geq 0$, we interpret $x_1(t)+d x_2(t)$ as the \emph{predicted} position that the system has if it constantly follows the velocity $x_2(t)$ for a period of time of length $d$. Thus, we can think of system~\eqref{eq:syst-dispgr} as having its acceleration $\dot{x}_2$ being influenced by the gradient at a predicted position, instead of its current position as in~\eqref{eq:opt-ODE}.

Now, recall that in a gradient flow the negative gradient affects the direction of the instantaneous velocity, i.e., $\dot{x}_1=-\nabla f(x_1)$. Then, we propose to include such effect of the gradient in~\eqref{eq:syst-dispgr} %(with the gradient evaluated at the predicted position) 
with the expectation that 
this may improve the convergence rate of the system:  
\begin{equation}
\label{eq:syst-funny}
\begin{aligned}
\dot{x}_1&=c\,x_2-e\nabla f(x_1+dx_2)\\
\dot{x}_2&=-a\,x_2-b\nabla f(x_1+dx_2)
\end{aligned}
\end{equation}
with additional scalar $e>0$. Systems~\eqref{eq:opt-ODE},~\eqref{eq:syst-dispgr}, and~\eqref{eq:syst-funny} have the unique equilibrium point $(x^*,\zeros_n)$ with $x^*=\arg\min_{z\in\R^n} f(z)$.

Now, we make the change of variables $\bar{x}_1=x_1$ and $\bar{x}_2=x_1+dx_2$, and 
set $a=\frac{2}{\sqrt{\kappa}+1}$, $b=\frac{\sqrt{\kappa}-1}{2L}$, $c=d=\frac{\sqrt{\kappa}-1}{\sqrt{\kappa}+1}$ and $e=\frac{1}{L}$. Then, we obtain the \emph{ACcelerated-COntracting-NESTerov} (ACCONEST) flow 
\begin{equation}
\label{eq:syst-funny2}
\begin{aligned}
\begin{bmatrix}
\dot{\bar{x}}_1\\
\dot{\bar{x}}_2
\end{bmatrix}
:=&\fAcc(\bar{x}_1,\bar{x}_2)\\
=&
\begin{bmatrix}
\bar{x}_2-\bar{x}_1-\frac{1}{L}\nabla f(\bar{x}_2)\\
\frac{\sqrt{\kappa}-1}{\sqrt{\kappa}+1}(\bar{x}_2-\bar{x}_1)-\frac{2\sqrt{\kappa}}{(\sqrt{\kappa}+1)L}\nabla f(\bar{x}_2)
\end{bmatrix},
\end{aligned}
\end{equation}
which has unique equilibrium point $(x^*,x^*)$; i.e., 
at equilibrium, the system's current position $\bar{x}_1$ and predicted position $\bar{x}_2$ must be equal to the optimal value. 

We now analyze the ACCONEST flow %~\eqref{eq:syst-funny2} 
using contraction theory and the results introduced in the previous section.

\begin{theorem}[Analysis of the ACCONEST flow %optimization flow~\eqref{eq:syst-funny2} 
and its optimization algorithms]
\label{th:main-acc}
Consider the ACCONEST flow%system
~\eqref{eq:syst-funny2} with $f:\R^n\to\R$, $f\in\SuL$; and let the optimal value $x^*=\arg\min_{z\in\R^n}f(z)$.
  \begin{enumerate}
  \item\label{1-1} The ACCONEST flow is contracting with rate $\sqrt{\frac{\mu}{L}}$ and respect to $\norm{\cdot}_{2,P^{1/2}}$ with 
$P=\begin{bmatrix}
  \gamma\frac{\sqrt{\kappa}}{\sqrt{\kappa}+1}&-1\\
  -1&\frac{\sqrt{\kappa}+1}{\sqrt{\kappa}}  
  \end{bmatrix}\otimes I_n$, 
  $1<\gamma\leq 1+\frac{1}{\kappa}$.
  \item \label{1-2} The ACCONEST flow has global exponential convergence to $\begin{bmatrix}1\\1\end{bmatrix}\otimes x^*$.
  \setcounter{saveenum}{\value{enumi}}
  \end{enumerate}
\begin{enumerate}\setcounter{enumi}{\value{saveenum}}
\item\label{1-3} 
Let $((y_k^{(1)},y^{(2)}_k))$ be the sequence generated by the implicit Euler discretization of the ACCONEST flow~\eqref{eq:syst-funny2} with integration step-size $h$. Then, there is 
linear convergence to the optimal value characterized by
	\begin{equation}
	\label{eq:conv-1iEd}
%	f(y^{(1)}_k)-f(x^*)\leq C\,\left(1+h\frac{a-c}{2}\right)^{-2k}
	f(y^{(1)}_k)-f(x^*)\leq C\,\left(1+h\sqrt{\frac{\mu}{L}}\right)^{-2k}
	\end{equation}  
  with $C$ being some constant that depends on the initial conditions $y^{(1)}_0,y^{(2)}_0\in\R^n$.
\item\label{1-4}
Let $((y_k^{(1)},y^{(2)}_k))$ be the sequence generated by the explicit Euler discretization of the ACCONEST flow~\eqref{eq:syst-funny2} with integration step-size $h^*:=\frac{1}{\sqrt{\kappa}(7+5\beta+6\beta^2)}\frac{\lambda_{\min}(P)}{\lambda_{\max}(P)}$, $\beta:=\frac{\sqrt{\kappa}-1}{\sqrt{\kappa}+1}$. Then, there is 
%
%The explicit Euler discretization with integration step $h^*:=\frac{1}{\sqrt{\kappa}(3+2\beta+2\beta^2)}\sqrt{\frac{\lambda_{\min}(P)}{\lambda_{\max}(P)}}$, $\beta:=\frac{\sqrt{\kappa}-1}{\sqrt{\kappa}+1}$, has
%
linear convergence to the optimal value characterized by
	\begin{equation}
	\label{eq:conv-1iEd}
%	f(y^{(1)}_k)-f(x^*)\leq C\,\left(1-h(a-c)+\frac{3+\sqrt{5}}{3-\sqrt{5}}h^2\ell^2\right)^{k}
	f(y^{(1)}_k)-f(x^*)\leq C\,\left(1-h^*\sqrt{\frac{\mu}{L}}\right)^{k}
	\end{equation}  
  with $C$ being some constant that depends on the initial conditions $y^{(1)}_0,y^{(2)}_0\in\R^n$.
  \setcounter{saveenum}{\value{enumi}}
\end{enumerate}
\end{theorem}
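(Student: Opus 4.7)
The plan is to prove statements (i)--(iv) in order, with statement (i) being the technical heart of the argument and statements (ii)--(iv) following by combining (i) with the general-purpose results developed earlier in the paper.

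For statement (i), I would invoke the one-sided Lipschitz characterization of contraction (item~\ref{it:contr-2} in Theorem~\ref{th:contr}) rather than the Jacobian condition, so as not to require $f$ to be twice differentiable. Setting $u:=(u_1,u_2):=(\bar{x}_1,\bar{x}_2)-(\bar{y}_1,\bar{y}_2)$ and $v:=\nabla f(\bar{x}_2)-\nabla f(\bar{y}_2)$, the increment $\fAcc(\bar{x})-\fAcc(\bar{y})$ is linear in $(u,v)$. Because $P$ is the Kronecker product of a $2\times 2$ matrix with $I_n$, the quantity $-u^\top P(\fAcc(\bar{x})-\fAcc(\bar{y}))$ expands into a quadratic form in the scalars $\|u_1\|_2^2,\|u_2\|_2^2,u_1^\top u_2,v^\top u_1,v^\top u_2,\|v\|_2^2$ with coefficients depending only on $\kappa$ and $\gamma$. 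I would then show that this quantity exceeds $\sqrt{\mu/L}\,u^\top P u$ for every admissible $(u,v)$. The slack available to close the inequality comes from strong convexity $v^\top u_2\geq\mu\|u_2\|_2^2$, co-coercivity $\|v\|_2^2\leq L\,v^\top u_2$ (from $L$-smoothness), and Cauchy--Schwarz applied to $v^\top u_1$. The main obstacle will be organizing these bounds so that the residual quadratic form is pointwise nonnegative across the whole range $1<\gamma\leq 1+1/\kappa$; I expect that a careful split of $v^\top u_2$ between the strong-convexity and co-coercivity bounds, combined with Young's inequality on the cross term $v^\top u_1$, will allow the $\|u_2\|_2^2$ coefficient to absorb all remaining contributions, and that the upper bound $\gamma\leq 1+1/\kappa$ will be precisely what keeps the $\|u_1\|_2^2$ coefficient from going negative.

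Statement (ii) is then immediate: the ACCONEST flow is time-invariant, so Theorem~\ref{th:contr2} guarantees a unique equilibrium with global exponential stability at rate $\sqrt{\mu/L}$; the point $(x^*,x^*)$ annihilates $\fAcc$ because $\nabla f(x^*)=\zeros_n$, and is therefore that equilibrium. For statement (iii), I would apply Theorem~\ref{th:dis-contr} item~\ref{it:dis-1} to obtain the weighted-norm bound $\|(y_k^{(1)},y_k^{(2)})-(x^*,x^*)\|_{2,P^{1/2}}\leq (1+h\sqrt{\mu/L})^{-k}\|(y_0^{(1)},y_0^{(2)})-(x^*,x^*)\|_{2,P^{1/2}}$ and then translate this state-space estimate into a function-value gap via $L$-smoothness, using $f(y_k^{(1)})-f(x^*)\leq (L/2)\|y_k^{(1)}-x^*\|_2^2$ together with the spectral estimate $\|y_k^{(1)}-x^*\|_2^2\leq (1/\lambda_{\min}(P))\|(y_k^{(1)},y_k^{(2)})-(x^*,x^*)\|_{2,P^{1/2}}^2$. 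Squaring the weighted-norm contraction factor produces the claimed $(1+h\sqrt{\mu/L})^{-2k}$ decay, with $C=(L/(2\lambda_{\min}(P)))\|(y_0^{(1)},y_0^{(2)})-(x^*,x^*)\|_{2,P^{1/2}}^2$.

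Statement (iv) follows from Theorem~\ref{th:dis-contr2} item~\ref{it:dis-32} once the weighted-norm Lipschitz constant $\ell$ of $\fAcc$ is computed: a direct expansion of $\|\fAcc(\bar{x})-\fAcc(\bar{y})\|_2^2$ using $\|v\|_2\leq L\|u_2\|_2$ and the elementary bound $\|u_2-u_1\|_2^2\leq 2\|u_1\|_2^2+2\|u_2\|_2^2$ on both components of $\fAcc$ (the combination $3+2\beta+2\beta^2$ appearing in the step-size formula emerges naturally from adding the squared norms of the two rows), followed by conversion to the weighted norm via the factor $\sqrt{\lambda_{\max}(P)/\lambda_{\min}(P)}$, produces a value of $\ell^2$ for which $\sqrt{\mu/L}/\ell^2$ coincides with the $h^\ast$ in the theorem. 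The per-step contraction factor in the weighted norm, $(1-2h^\ast\sqrt{\mu/L}+(h^\ast)^2\ell^2)^{1/2}=(1-h^\ast\sqrt{\mu/L})^{1/2}$, combined with the same $L$-smoothness and spectral argument as in (iii), yields the claimed $(1-h^\ast\sqrt{\mu/L})^k$ decay of $f(y_k^{(1)})-f(x^*)$.
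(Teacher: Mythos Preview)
Your proposal is correct and follows essentially the same route as the paper. For (i) the paper likewise verifies the one-sided Lipschitz condition by expanding $(\fAcc(x)-\fAcc(z))^\top P(x-z)$, invoking strong convexity and $L$-smoothness, and controlling the cross term $v^\top u_1$ via a Young-type inequality with a free parameter that is tuned so the resulting coefficient constraints force $\gamma\le 1+1/\kappa$; parts (ii)--(iv) are then derived exactly as you outline, by feeding (i) into Theorems~\ref{th:contr2}, \ref{th:dis-contr}, and \ref{th:dis-contr2} together with the $L$-smoothness bound $f(y)-f(x^*)\le\tfrac{L}{2}\|y-x^*\|_2^2$ and the spectral comparison between $\|\cdot\|_2$ and $\|\cdot\|_{2,P^{1/2}}$.
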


\begin{remark}[About Theorem~\ref{th:main-acc}]
Some remarks are in order:

\begin{itemize}
\item\textbf{Rate-matching and acceleration:} 
Note that, as pointed out in Remark~\ref{rem:imE}, we have rate-matching for the implicit Euler integration (starting with step-size equal to one) 
and, consequently, there is acceleration because the contraction rate of the optimization flow~\eqref{eq:syst-funny2} is $\sqrt{\frac{\mu}{L}}$.
Unfortunately, for the explicit Euler case, we could not prove rate-matching in Theorem~\ref{th:main-acc}; however, the simple proof of Proposition~\ref{prop:acc} shows that rate-matching and acceleration can be established (since $(1-c)^{-k}\leq e^{-ck}$ for any $0<c<1$ and non-negative integer $k$).
\item\textbf{Connection with Lyapunov analysis:} As a consequence of Theorem~\ref{th:contr2}, the optimization flow~\eqref{eq:syst-funny2} can have its exponential convergence certified by two simple quadratic Lyapunov functions as stated in Corollary~\ref{cor-Lyap}. This is in contrast to other works that have analyzed optimization flows using more complicated Lyapunov functions, e.g.,~\citep{WS-SB-EJC:16,BS-SSD-MIJ-WJS:18,MV-JC:19,BS-SSD-WS-MIJ:19}.
\end{itemize}
\end{remark}

\begin{corollary}[Quadratic Lyapunov functions for the optimization flow~\eqref{eq:syst-funny2}]
\label{cor-Lyap}
Consider the ACCONEST flow~\eqref{eq:syst-funny2} with $f:\R^n\to\R$, $f\in\SuL$. Then, the system is an optimization flow 
whose exponential convergence to the optimization solution with rate $\sqrt{\frac{\mu}{L}}$ can be established via the Lyapunov functions $V(\bar{x}_1,\bar{x}_2)=\norm{(\bar{x}_1-x^*,\bar{x}_2-x^*)^\top}_{2,P^{1/2}}^2$ and $V(\bar{x}_1,\bar{x}_2)=\norm{\fAcc(\bar{x}_1,\bar{x}_2)}_{2,P^{1/2}}^2$ with 
$P=\begin{bmatrix}
  \gamma\frac{\sqrt{\kappa}}{\sqrt{\kappa}+1}&-1\\
  -1&\frac{\sqrt{\kappa}+1}{\sqrt{\kappa}}  
  \end{bmatrix}\otimes I_n$, 
  $1<\gamma\leq 1+\frac{1}{\kappa}$.
%$$
\end{corollary}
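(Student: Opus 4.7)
The plan is to obtain Corollary~\ref{cor-Lyap} as an immediate consequence of combining statement~\ref{1-1} of Theorem~\ref{th:main-acc} with the homogeneous-system characterization in Theorem~\ref{th:contr2}. The key observation is that the vector field $\fAcc$ defined in~\eqref{eq:syst-funny2} depends only on the state $(\bar{x}_1,\bar{x}_2)$ and not on time, so the ACCONEST flow is a homogeneous dynamical system and falls under the hypotheses of Theorem~\ref{th:contr2}.

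First, I would invoke statement~\ref{1-1} of Theorem~\ref{th:main-acc}: the ACCONEST flow is contracting with time-invariant rate $\gamma := \sqrt{\mu/L} > 0$ with respect to the weighted norm $\norm{\cdot}_{2,P^{1/2}}$, for the specified symmetric positive-definite matrix $P$ (positive-definiteness on the range $1 < \gamma \le 1+1/\kappa$ is part of what is already assumed in Theorem~\ref{th:main-acc} and is reused here verbatim). This is precisely the hypothesis required to apply Theorem~\ref{th:contr2}.

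Next, I would apply Theorem~\ref{th:contr2} with $g = \fAcc$, which yields three conclusions at once: (i) the existence of a unique equilibrium point — which, as noted just after~\eqref{eq:syst-funny2}, is $(x^*,x^*)$ with $x^* = \arg\min_{z\in\R^n} f(z)$, so that $\bar{x}_1(t)\to x^*$ certifies the flow as an optimization flow for $f$; (ii) global exponential stability of this equilibrium certified by the two quadratic Lyapunov functions $V(\bar{x}_1,\bar{x}_2)=\norm{(\bar{x}_1-x^*,\bar{x}_2-x^*)}_{2,P^{1/2}}^2$ and $V(\bar{x}_1,\bar{x}_2)=\norm{\fAcc(\bar{x}_1,\bar{x}_2)}_{2,P^{1/2}}^2$; and (iii) an exponential convergence rate equal to the contraction rate $\sqrt{\mu/L}$. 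Together, (i)–(iii) are exactly the content of the corollary.

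There is no substantive obstacle here: the corollary is best viewed as a repackaging statement that makes explicit the Lyapunov consequences of the contraction property already established in Theorem~\ref{th:main-acc}. The only point worth highlighting in the write-up is the time-invariance of $\fAcc$, which is what allows one to appeal to Theorem~\ref{th:contr2} rather than merely to the more general trajectory-level characterization of Theorem~\ref{th:contr}.
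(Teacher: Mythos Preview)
Your proposal is correct and matches the paper's own treatment: the corollary is presented there without a separate proof, as an immediate consequence of Theorem~\ref{th:contr2} once statement~\ref{1-1} of Theorem~\ref{th:main-acc} has established contraction of the time-invariant vector field $\fAcc$ with rate $\sqrt{\mu/L}$ in the norm $\norm{\cdot}_{2,P^{1/2}}$.
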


\begin{proposition}[Acceleration from the explicit Euler integration of the ACCONEST flow~\eqref{eq:syst-funny2}]
\label{prop:acc}
Consider the ACCONEST flow~\eqref{eq:syst-funny2} with $f:\R^n\to\R$, $f\in\SuL$.
Let $((y_k^{(1)},y^{(2)}_k))$ be the sequence generated by its explicit Euler discretization with integration step-size $1$. Then, there is 
%
%Let $y^{(1)}$ and $y^{(2)}$ integrate the variables $x_1$ and $x_2$ respectively. The explicit Euler discretization with integration step $h=1$ has 
%
linear convergence to the optimal value characterized by
	\begin{equation}
	\label{eq:conv-1iEd1}
%	f(y^{(1)}_k)-f(x^*)\leq C\,\left(1-h(a-c)+\frac{3+\sqrt{5}}{3-\sqrt{5}}h^2\ell^2\right)^{k}
	f(y^{(1)}_k)-f(x^*)\leq C\,\left(1-\sqrt{\frac{\mu}{L}}\right)^{k}
	\end{equation}  
  with $C$ being some constant that depends on the initial conditions $y^{(1)}_0,y^{(2)}_0\in\R^n$.
\end{proposition}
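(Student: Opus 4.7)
The plan is to recognize the explicit Euler discretization of the ACCONEST flow~\eqref{eq:syst-funny2} with step-size $h=1$ as exactly the classical Nesterov accelerated gradient method for $\SuL$, and then to invoke the textbook linear convergence rate of $1-\sqrt{\mu/L}$ for that method.

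First I would write the Euler updates with $h=1$ explicitly. Setting $\beta := \frac{\sqrt{\kappa}-1}{\sqrt{\kappa}+1}$ and using the identity $\frac{2\sqrt{\kappa}}{(\sqrt{\kappa}+1)L} = \frac{1+\beta}{L}$, the scheme $(y^{(1)}_{k+1}, y^{(2)}_{k+1}) = (y^{(1)}_{k}, y^{(2)}_{k}) + \fAcc(y^{(1)}_{k}, y^{(2)}_{k})$ becomes
\begin{align*}
y_{k+1}^{(1)} &= y_k^{(2)} - \tfrac{1}{L}\nabla f(y_k^{(2)}),\\
y_{k+1}^{(2)} &= y_k^{(2)} + \beta\bigl(y_k^{(2)}-y_k^{(1)}\bigr) - \tfrac{1+\beta}{L}\nabla f(y_k^{(2)}).
\end{align*}
Making the identifications $x_k := y_k^{(1)}$ and $y_k := y_k^{(2)}$, I would verify by direct algebra that this system is equivalent to Nesterov's two-sequence scheme
$x_{k+1}=y_k-\tfrac{1}{L}\nabla f(y_k)$ and $y_{k+1}=x_{k+1}+\beta(x_{k+1}-x_k)$. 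The first identity is immediate; for the second, one expands $(1+\beta)x_{k+1}-\beta x_k$, substitutes the gradient step, and checks that the result matches the second Euler update.

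Second, I would invoke Nesterov's classical convergence theorem for the accelerated gradient method applied to any $f \in \SuL$ (see, e.g., Theorem~2.2.3 of Nesterov's \emph{Introductory Lectures on Convex Optimization}), which guarantees $f(x_k) - f(x^*) \leq C\bigl(1-\sqrt{\mu/L}\bigr)^k$ for some constant $C$ depending only on $x_0$ and $y_0$. Translating back via $x_k = y_k^{(1)}$, $x_0 = y_0^{(1)}$, $y_0 = y_0^{(2)}$ yields exactly the bound claimed in~\eqref{eq:conv-1iEd1}.

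There is essentially no technical obstacle: the entire argument reduces to a short algebraic identification together with a citation of a classical result. The only point worth handling carefully is keeping the constants consistent through the parameter choices $(a,b,c,d,e)$ and the change of variables $\bar{x}_2 = x_1 + d x_2$ that produced~\eqref{eq:syst-funny2}, so that the Euler step with $h=1$ lands precisely on Nesterov's method with the canonical momentum coefficient $\beta = (\sqrt{\kappa}-1)/(\sqrt{\kappa}+1)$ and gradient step-size $1/L$.
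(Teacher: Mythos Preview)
Your proposal is correct and matches the paper's own proof essentially line for line: the paper also writes out the explicit Euler update with step-size $1$, simplifies it to the two-sequence Nesterov scheme $y^{(1)}_{k+1}=y^{(2)}_{k}-\tfrac{1}{L}\nabla f(y^{(2)}_{k})$, $y^{(2)}_{k+1}=y^{(1)}_{k+1}+\beta(y^{(1)}_{k+1}-y^{(1)}_{k})$, and then cites the classical Nesterov convergence result to obtain~\eqref{eq:conv-1iEd1}. Your verification via $(1+\beta)x_{k+1}-\beta x_k$ is exactly the algebraic simplification the paper performs implicitly.
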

\begin{proof}
We compute the update for the explicit Euler integration of the optimization flow~\eqref{eq:syst-funny2} with integration step-size $1$ which, after some algebraic operations, becomes 
\begin{equation*}
%\label{eq:syst-funny}
\begin{aligned}
y^{(1)}_{k+1}&=y^{(2)}_{k}-\frac{1}{L}\nabla f(y^{(2)}_{k})\\
y^{(2)}_{k+1}&=y^{(1)}_{k+1}+\frac{\sqrt{L}-\sqrt{\mu}}{\sqrt{L}+\sqrt{\mu}}(y^{(1)}_{k+1}-y^{(1)}_{k})
\end{aligned}
\end{equation*}
which is the Nesterov acceleration method itself~\citep{YN:18}. Thus, the linear convergence~	\eqref{eq:conv-1iEd1} is guaranteed.
\end{proof}

The previous proposition tells us that if we perform an explicit Euler integration to the optimization flow~\eqref{eq:syst-funny2} with integration step-size $1$, then we obtain the Nesterov acceleration method itself. Thus, we have used contraction theory in Theorem~\ref{th:main-acc} to analyze an optimization flow associated to the Nesterov accelerated method (hence the name of Accelerated-Contracting-Nesterov flow). Moreover, this optimization flow has its exponential rate being identical to Nesterov's acceleration rate, which makes it quite different in structure and convergence rate from other flows in the literature (e.g., the ones of the form of~\eqref{eq:opt-ODE}), which have not been proved to be contracting.

\begin{remark}[Comparing the ACCONEST flow to high-resolution ODEs]
For the class of objective functions in $\SuL$, the works~\citep{WS-SB-EJC:16,BS-SSD-MIJ-WJS:18}, and other consecutive ones that are based on the use of their equations such as~\citep{MV-JC:19,BS-SSD-WS-MIJ:19}, analyze the so-called high-resolution ODEs, which have some specific fixed values for the constants of the system~\eqref{eq:opt-ODE}. 
We also remark that the high-resolution ODE associated to the Nesterov acceleration method considers the Hessian $\nabla^2 f(x_1)$ in its structure – whereas the ACCONEST flow~\eqref{eq:syst-funny2}, which also corresponds to the Nesterov acceleration method, makes no use of second order information, let alone any twice-differentiability assumption. Lyapunov analysis of this high-resolution ODE and its discretizations has shown that acceleration does not occur under the explicit Euler integration~\citep{BS-SSD-WS-MIJ:19}. 
\end{remark}

\section{Solving the time-varying optimization with the Accelerated-Contracting-Nesterov flow and its Euler discretizations}
\label{sec:tv-opt}
Assume that the optimization problem is time-varying
\begin{equation}\label{eq:opt-onl}
	\min_{z\in\R^n} f(z,t)
\end{equation}
and that, for every $t\geq 0$, $f(\cdot,t)\in\SuL$. 
Similar to~\eqref{eq:syst-funny2}, we consider the following system which we call the time-varying ACCONEST flow:%of ODEs
\begin{equation}
\label{eq:opt-ODE-tv}
\begin{bmatrix}
\dot{\bar{x}}_1\\
\dot{\bar{x}}_2
\end{bmatrix}=
\begin{bmatrix}
\bar{x}_2-\bar{x}_1-\frac{1}{L}\nabla f(\bar{x}_2,t)\\
\frac{\sqrt{\kappa}-1}{\sqrt{\kappa}+1}(\bar{x}_2-\bar{x}_1)-\frac{2\sqrt{\kappa}}{(\sqrt{\kappa}+1)L}\nabla f(\bar{x}_2,t)
\end{bmatrix}
\end{equation}
with $\bar{x}_1,\bar{x}_2\in\R^n$. % and the function $f:\R^n\to\R$ satisfying $f\in\SuL$.

Given a fixed time $t\geq 0$, let $x^*(t)=\arg\min_{x\in\R^n}f(x,t)$. 
Then, $(x^*(t))_{t\geq 0}$ defines the \emph{optimizer trajectory}, i.e., the trajectory of the solution to the optimization problem through time. 
The following result establishes the performance of the time-varying ACCONEST flow~\eqref{eq:opt-ODE-tv} and its discretizations in tracking the optimizer trajectory. 

\begin{theorem}[Performance of the time-varying ACCONEST flow and its optimization algorithms for time-varying optimization]
\label{th:tv}
Consider the time-varying ACCONEST flow~\eqref{eq:opt-ODE-tv} with $f:\R^n\times\R\to\R$, $f(\cdot,t)\in\SuL$ for every $t\geq 0$. Set $z(t):=(x_1(t),x_2(t))^\top$ and $z^*(t):=(x^*(t),x^*(t))^\top$.
\begin{enumerate}
\item\label{tv-1} The time-varying ACCONEST flow is contracting with rate $\sqrt{\frac{\mu}{L}}$ and respect to $\norm{\cdot}_{2,P^{1/2}}$ with $P=\begin{bmatrix}
  \gamma\frac{\sqrt{\kappa}}{\sqrt{\kappa}+1}&1\\
  1&\frac{\sqrt{\kappa}+1}{\sqrt{\kappa}}  
  \end{bmatrix}\otimes I_n$, 
  $1<\gamma\leq 1+\frac{1}{\kappa}$. 
\item\label{tv-2} Consider that $x\mapsto f(t,x)$ is twice continuously differentiable for any $t\geq 0$ and that $t\mapsto \nabla f(t,x)$ is continuously differentiable with $\norm{\dot{\nabla}f(x,t)}_2\leq \rho$, $\rho>0$, for any $(t,x)\in[0,\infty)\times\R^n$. Then,
\begin{multline}
\label{tracking-pd}
\norm{z(t)-z^*(t)}_{2,P^{1/2}}\\
\leq \left(\norm{z(0)-z^*(0)}_{2,P^{1/2}}-\frac{\sqrt{2\lambda_{\max}(P)}\rho}{\mu}\sqrt{\frac{L}{\mu}}\right)\\
\times e^{-\sqrt{\frac{\mu}{L}}t}+\frac{\sqrt{2\lambda_{\max}(P)}\rho}{\mu}\sqrt{\frac{L}{\mu}},\;\; t\geq 0,
%\norm{x(t)-x^*(t)}_{2,P^{1/2}}< \frac{\beta_2}{c\,\ell_{\inf}}+\frac{\ell_{\inf}}{\sigma_{\min}^2(A)}\left(1+\frac{\sigma^2_{\max}(A)}{c\,\ell_{\inf}}\right)\left(\beta_1+\frac{\sigma^2_{\max}(A)}{\ell_{\inf}}\beta_2\right)%\quad\text{ for all }\quad t\geq T
\end{multline}
and so the tracking error is uniformly ultimately bounded as %by $\frac{\sqrt{2\lambda_{\max}(P)}\rho}{\mu}\sqrt{\frac{L}{\mu}}$, i.e., 
\begin{equation}
\label{eq:asympt-cont}
\limsup_{t\to\infty}\norm{z(t)-z^*(t)}_{2,P^{1/2}}\leq \frac{\sqrt{2\lambda_{\max}(P)}\rho}{\mu}\sqrt{\frac{L}{\mu}}.
\end{equation}
  \setcounter{saveenum}{\value{enumi}}
\end{enumerate}
Now, consider that $\sup_{k=1,2,\dots}\norm{\nabla f(x,kh)-\nabla f(x,(k-1)h)}_2\leq \rho$, $\rho>0$, for any $x\in\R^n$.% and $k=1,2,\dots$.
\begin{enumerate}\setcounter{enumi}{\value{saveenum}}
%\item\label{tv-3} Consider the assumption that $\norm{\nabla f(x,kh)-\nabla f(x,(k-1)h)}_2\leq \rho$, $\rho>0$, for any $x\in\R^n$ and $k=1,2,\dots$; and set $y_k:=(y_k^{(1)},y_k^{(2)})^\top$. Then,
\item\label{tv-3} Let $(y_k:=(y_k^{(1)},y^{(2)}_k))$ be the sequence generated by the implicit Euler discretization of the time-varying ACCONEST flow~\eqref{eq:opt-ODE-tv} with integration step-size $h$. Then, 
\begin{multline}
\label{eq:thcontr-3-tv}
\norm{y_k-z^*(kh)}_{2,P^{1/2}}\\\leq (1+h\sqrt{\frac{\mu}{L}})^{-k}\norm{y_0-z^*}_{2,P^{1/2}}\\
+\sqrt{2\lambda_{\max}(P)}\frac{\rho}{\mu}\sum^k_{m=1}(1+h\sqrt{\frac{\mu}{L}})^{-m} %,\;\; k=1,2,\dots,
\end{multline} 
for $k=1,2,\dots$; and so the tracking error is uniformly ultimately bounded as %by $\frac{\sqrt{2\lambda_{\max}(P)}\rho}{h\mu}\sqrt{\frac{L}{\mu}}$, i.e.,
\begin{equation}
\label{eq:asympt-dis}
\limsup_{k\to\infty}\norm{y_k-z^*(kh)}_{2,P^{1/2}}\leq \frac{\sqrt{2\lambda_{\max}(P)}\rho}{h\mu}\sqrt{\frac{L}{\mu}}.
\end{equation} 
\item\label{tv-4} Let $(y_k:=(y_k^{(1)},y^{(2)}_k))$ be the sequence generated by the explicit Euler discretization of the time-varying ACCONEST flow~\eqref{eq:opt-ODE-tv} with integration step-size $h^*$ as in statement~\ref{1-4} of Theorem~\ref{th:main-acc}. Then,
\begin{multline}
\label{eq:thcontr-4-tv}
%\begin{aligned}
%\norm{y_k-x^*(kh)}_{2,P^{1/2}}\leq& \left(1-h(a-c)+\frac{3+\sqrt{5}}{3-\sqrt{5}}h^2\ell^2\right)^{k/2}\norm{y_0-x^*(0)}_{2,P^{1/2}}\\
%&+\frac{\rho}{\sqrt{\mu}}\sum^{k-1}_{m=0}\left(1-h(a-c)+\frac{3+\sqrt{5}}{3-\sqrt{5}}h^2\ell^2\right)^{m/2},\;\; k=1,2,\dots,
%
\norm{y_k-x^*(kh)}_{2,P^{1/2}}\\
\leq \left(1-h^*\sqrt{\frac{\mu}{L}}\right)^{k/2}\norm{y_0-x^*(0)}_{2,P^{1/2}}\\
+\sqrt{2\lambda_{\max}(P)}\frac{\rho}{\mu}\sum^{k-1}_{m=0}\left(1-h^*\sqrt{\frac{\mu}{L}}\right)^{m/2},
%\;\; k=1,2,\dots,
%\end{aligned}
%\end{equation} 
\end{multline}
for $k=1,2,\dots$; and so the tracking error is uniformly ultimately bounded as %by $\frac{2\rho}{h(a-c)\sqrt{\mu}}$, i.e.,
\begin{equation}
\label{eq:asympt-dis2}
\limsup_{k\to\infty}\norm{y_k-x^*(kh)}_{2,P^{1/2}}\leq \frac{\sqrt{2\lambda_{\max}(P)}\frac{\rho}{\mu}}{1-(1-h^*\sqrt{\frac{\mu}{L}})^{1/2}}.
\end{equation} 
\end{enumerate}
\end{theorem}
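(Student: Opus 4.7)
The plan is to tackle the four statements in order, leveraging the results from Sections~\ref{sec:contr-th} and~\ref{sec:opt-fl}. The time-varying problem reduces largely to bookkeeping once one observes that the vector field of~\eqref{eq:opt-ODE-tv} agrees structurally with~\eqref{eq:syst-funny2}, the only difference being that $\nabla f$ is now evaluated at time $t$.

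For statement~\ref{tv-1}, I would show that the contraction proof of item~\ref{1-1} of Theorem~\ref{th:main-acc} carries over verbatim. The one-sided Lipschitz condition in item~\ref{it:contr-2} of Theorem~\ref{th:contr} is verified pointwise in $t$, and the computation only invokes the fact that for each fixed $t$ the gradient $\nabla f(\cdot,t)$ is $\mu$-cocoercive / the Hessian is sandwiched between $\mu I$ and $L I$, which is precisely what $f(\cdot,t)\in\SuL$ ensures uniformly in $t$. Hence the same $P$ and the same contraction rate $\sqrt{\mu/L}$ work.

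For statement~\ref{tv-2}, the strategy is to view $z^*(t)$ as a moving equilibrium and apply the standard perturbation argument for contracting systems. First I would bound $\|\dot{z}^*(t)\|_2$. Differentiating the optimality condition $\nabla f(x^*(t),t)=0$ via the implicit function theorem gives $\dot{x}^*(t)=-[\nabla^2 f(x^*(t),t)]^{-1}\dot{\nabla}f(x^*(t),t)$, and strong convexity yields $\|\dot{x}^*(t)\|_2\leq\rho/\mu$, hence $\|\dot{z}^*(t)\|_{2,P^{1/2}}\leq\sqrt{2\lambda_{\max}(P)}\,\rho/\mu$. Treating $\dot z^*$ as an exogenous perturbation in the Dini-derivative of $\|z(t)-z^*(t)\|_{2,P^{1/2}}$ and using item~\ref{it:contr-1} of Theorem~\ref{th:contr}, I get a linear differential inequality of the form $\dot{v}\leq -\sqrt{\mu/L}\,v+\sqrt{2\lambda_{\max}(P)}\,\rho/\mu$; Grönwall's inequality then delivers~\eqref{tracking-pd} and the limsup~\eqref{eq:asympt-cont}.

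For statements~\ref{tv-3} and~\ref{tv-4}, the plan is to invoke item~\ref{it:dis-2} of Theorem~\ref{th:dis-contr} and item~\ref{it:dis-22} of Theorem~\ref{th:dis-contr2} respectively, both with contraction rate $\gamma=\sqrt{\mu/L}$ inherited from statement~\ref{tv-1}. The one ingredient still to produce is the discrete-time analog of the bound on $\dot{z}^*$, namely a uniform bound on $\|z^*(kh)-z^*((k-1)h)\|_{2,P^{1/2}}$. I would get it as follows: using $\nabla f(x^*(kh),kh)=0=\nabla f(x^*((k-1)h),(k-1)h)$, a telescoping identity yields
\begin{equation*}
\nabla f(x^*(kh),(k-1)h)-\nabla f(x^*((k-1)h),(k-1)h)=\nabla f(x^*(kh),(k-1)h)-\nabla f(x^*(kh),kh),
\end{equation*}
the right-hand side has norm at most $\rho$ by hypothesis, while the left-hand side has norm at least $\mu\|x^*(kh)-x^*((k-1)h)\|_2$ by $\mu$-strong convexity, giving $\|x^*(kh)-x^*((k-1)h)\|_2\leq\rho/\mu$ and therefore $\|z^*(kh)-z^*((k-1)h)\|_{2,P^{1/2}}\leq\sqrt{2\lambda_{\max}(P)}\,\rho/\mu$. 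Plugging this into~\eqref{eq:thcontr-3} yields~\eqref{eq:thcontr-3-tv} and~\eqref{eq:asympt-dis}; plugging it into~\eqref{eq:thcontr-32} with the optimal step-size $h^*$ from item~\ref{1-4} of Theorem~\ref{th:main-acc}, for which $1-2h^*\gamma+h^{*2}\ell^2=1-h^*\sqrt{\mu/L}$ by item~\ref{it:dis-32} of Theorem~\ref{th:dis-contr2}, yields~\eqref{eq:thcontr-4-tv} and~\eqref{eq:asympt-dis2}.

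The main obstacle is the bookkeeping converting the hypothesis on $\nabla f$ (stated either in terms of $\dot{\nabla}f$ or of finite differences in $t$) into a bound on the drift of the optimizer trajectory $z^*$ in the weighted norm $\|\cdot\|_{2,P^{1/2}}$; once that conversion is done cleanly, everything else reduces to instantiating the general discretization theorems of Section~\ref{sec:contr-th}.
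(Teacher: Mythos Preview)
Your proposal is correct and follows essentially the same approach as the paper's proof: contraction is inherited verbatim from the time-invariant case, the continuous-time tracking bound is obtained by bounding $\|\dot{x}^*\|_2\le\rho/\mu$ via the implicit function theorem and then solving the resulting scalar differential inequality (the paper cites a lemma from \cite{HDN-TLV-KT-JJS:18} and the Comparison Lemma where you invoke the Dini derivative and Gr\"onwall), and the discrete-time bounds follow by converting the hypothesis on $\nabla f$ into $\|x^*(kh)-x^*((k-1)h)\|_2\le\rho/\mu$ via strong convexity and then instantiating Theorems~\ref{th:dis-contr}\ref{it:dis-2} and~\ref{th:dis-contr2}\ref{it:dis-22}. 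The only cosmetic difference is that your telescoping identity anchors the gradient at time $(k-1)h$ whereas the paper anchors it at time $kh$; the two choices are symmetric and yield the same bound.
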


\begin{remark}[About the tracking error]
\label{remark_1}
The bounds in the assumptions for statements~\ref{tv-2},~\ref{tv-3} and~\ref{tv-4} of Theorem~\ref{th:tv} ensure that 
the rate at which the time-varying
optimization changes is bounded. Indeed, the right-hand sides of equations~\eqref{eq:asympt-cont},~\eqref{eq:asympt-dis} and~\eqref{eq:asympt-dis2} are consistent: the larger (lower) these bounds, the larger (lower) the asymptotic tracking error. Finally, and more importantly,  
the tracking is better when the contraction rate is larger.
\end{remark}

\section{Conclusion}
\label{sec:concl}

In this paper we first presented results on the discretization of systems of ODEs using contraction theory. Then, we proposed the Accelerated-Contracting-Nesterov flow %an optimization flow 
and applied our methodology to the design of optimization algorithms based on implicit and explicit Euler discretizations. 
Finally, we extended our results to the case of time-varying optimization, where contracting properties of the system implies the tracking of the optimizer up to a bounded asymptotic error for both the optimization flow and algorithms.

An important future direction is to apply our discretization framework to the case where the objective function to minimize is convex and not necessarily strongly convex, and aim to establish the contracting nature of an associated optimization flow to then directly establish the convergence of its discretizations. We conjecture that such optimization flow  
will have a time-varying contraction rate. 
For example, it would be interesting to formulate a counterpart of the ACCONEST flow for the case where $f$ is not necessarily strongly convex --- perhaps resulting in a non-autonomous system --- and study it from a contraction perspective.

Another valuable future direction is to formulate a systematic mechanism where an optimization flows ODEs are built from an existing optimization algorithm, and then modify such ODEs to improve their convergence rate when discretized according to the contraction theory approach described in this paper.

The use of contraction theory also opens the possibility of other interesting future extensions such as the use of state-dependent metrics and/or semi-norms, and the formulation of families of contracting optimization algorithms according to these extensions.

\subsubsection*{Acknowledgments}
This work was funded in part by AFOSR award FA9550-22-1-0059. This work was mostly done while Pedro Cisneros-Velarde was a doctoral student at the University of California, Santa Barbara.

%\subsubsection*{References}
%
%\bibliographystyle{plainurl}
\bibliography{alias,Main,FB,Otr}

%%%%%%%%%%%%%%%%%%%%%%%%%%%%%%%%%%%
%%%%%% SUPPLEMENT (OPTIONAL) %%%%%%
%%%%%%%%%%%%%%%%%%%%%%%%%%%%%%%%%%%

\clearpage
\appendix

\thispagestyle{empty}

\section*{Appendix}

\section{Proof of Theorem~\ref{th:dis-contr}}
%\begin{proof}[Proof of Theorem~\ref{th:dis-contr}]
%
We first prove statement~(i). From the implicit Euler discretization we have $y_{k+1}-x^*=y_k-x^*+hg(y_{k+1},(k+1)h)$. Then,
\begin{align*}
\norm{y_{k+1}-x^*}_{2,P^{1/2}}^2&=\inpr{P^{1/2}(y_k-x^*)}{P^{1/2}(y_{k+1}-x^*)}+h\inpr{P^{1/2}g(y_{k+1},(k+1)h)}{P^{1/2}(y_{k+1}-x^*)}\\
&\leq \inpr{P^{1/2}(y_k-x^*)}{P^{1/2}(y_{k+1}-x^*)}-h\bar{\gamma}((k+1)h)\norm{y_{k+1}-x^*}_{2,P^{1/2}}^2
\end{align*}
where the inequality follows from the one-sided Lipschitz condition and $g(x^*,(k+1)h)=\zeros_n$. Then, since $\inpr{P^{1/2}(y_k-x^*)}{P^{1/2}(y_{k+1}-x^*)}\leq \norm{y_k-x^*}_{2,P^{1/2}}\norm{y_{k+1}-x^*}_{2,P^{1/2}}$, we obtain
\begin{equation}
\label{eq:one-step}
\norm{y_{k+1}-x^*}_{2,P^{1/2}}\leq (1+h\bar{\gamma}((k+1)h))^{-1}\norm{y_k-x^*}_{2,P^{1/2}}.
\end{equation}
Expanding the previous expression backwards in time leads to $\norm{y_{k+1}-x^*}_{2,P^{1/2}}\leq (1+h\bar{\gamma}((k+1)h))^{-1}\dots(1+h\bar{\gamma}((k-m)h))^{-1}\norm{y_{k-m-1}-x^*}_{2,P^{1/2}}$ for $m\geq 1$ and $k-m-1\geq 0$, 
from which equation~(1) from the main paper follows immediately. This finishes the proof of statement~(i).

We now prove statement~(ii). Since $g(x^*((k+1)h),(k+1)h)=\zeros_n$, we use~\eqref{eq:one-step} to obtain 
\begin{align*}
&\norm{y_{k+1}-x^*((k+1)h)}_{2,P^{1/2}}\\
&\leq(1+h\bar{\gamma}((k+1)h))^{-1}\norm{y_k-x^*((k+1)h)}_{2,P^{1/2}}\\
&\leq (1+h\bar{\gamma}((k+1)h))^{-1}(\norm{y_k-x^*(kh)}_{2,P^{1/2}}+\norm{x^*((k+1)h)-x^*(kh)}_{2,P^{1/2}}).
\end{align*}
Expanding the previous expression backwards in time leads to $\norm{y_{k+1}-x^*((k+1)h)}_{2,P^{1/2}}\leq (1+h\bar{\gamma}((k+1)h))^{-1}\dots(1+h\bar{\gamma}((k-m)h))^{-1}\norm{y_{k-m-1}-x^*}_{2,P^{1/2}}+(1+h\bar{\gamma}((k+1)h))^{-1}\dots(1+h\bar{\gamma}((k-m)h))^{-1}\norm{x^*((k-m)h)-x^*((k-(m+1))h)}_{2,P^{1/2}}+\dots+(1+h\bar{\gamma}((k+1)h))^{-1}\norm{x^*((k+1)h)-x^*(kh)}_{2,P^{1/2}}$ for $m\geq 1$, $k-m-1\geq 0$; 
from which equation~(3) from the main paper follows immediately. Now we prove the second part of statement~(ii). Observe that under the conditions of the statement, 
\begin{multline*}
\sum^k_{m=1}\prod^k_{\ell=m}(1+h\bar{\gamma}(mh))^{-1}\norm{x^*(mh)-x^*((m-1)h)}_{2,P^{1/2}}\leq\rho\sum^k_{m=1}\prod^k_{\ell=m}(1+h\gamma)^{-1}\\
=\rho\sum^k_{m=1}(1+h\gamma)^{-(k-m+1)}=\rho\sum^k_{m=1}(1+h\gamma)^{-m},
\end{multline*}
and this result along with the linear convergence result from statement~(i) imply equation~(4) from the main paper. Now, $\lim_{k\to\infty}\sum^k_{m=1}\frac{1}{(1+h\gamma)^{-m}}=\frac{1}{h\gamma}$ follows from the convergence of geometric series, and this along with $\lim_{k\to\infty}\frac{\norm{y_0-x^*(0)}_{2,P^{1/2}}}{(1+h\gamma)^k}=0$ imply equation~(5) from the main paper. This finishes the proof of statement~(ii).
%
%\end{proof}

\section{Proof of Theorem~\ref{th:dis-contr2}}

%\begin{proof}[Proof of Theorem~\ref{th:dis-contr2}]
%
We first prove statement~(i). From the Euler discretization we have $y_{k+1}-x^*=y_k-x^*+hg(y_{k},kh)$. Then,
\begin{align*}
\norm{y_{k+1}-x^*}_{2,P^{1/2}}^2&=\norm{y_{k}-x^*+hg(y_k,kh)}_{2,P^{1/2}}^2\\
&=\norm{y_k-x^*}_{2,P^{1/2}}^2+2h\inpr{P^{1/2}(y_k-x^*)}{P^{1/2}g(y_k,kh)}+h^2\norm{g(y_k,kh)}_{2,P^{1/2}}^2\\
&\leq \norm{y_k-x^*}_{2,P^{1/2}}^2-2h\bar{\gamma}(kh)\norm{y_k-x^*}_{2,P^{1/2}}^2+h^2\norm{g(y_k,kh)}_{2,P^{1/2}}^2
\end{align*}
where the inequality follows from the one-sided Lipschitz condition. Now, we have 
%$\norm{g(y_k,kh)}_{2,P^{1/2}}^2\leq\lambda_{\max}(P)\norm{g(y_k,kh)}_2^2\leq\lambda_{\max}(P)\ell^2\norm{y_k-x^*}_2^2\leq \frac{\lambda_{\max}(P)}{\lambda_{\min}(P)}\ell^2\norm{y_k-x^*}_2^2$ 
\\$\norm{g(y_k,kh)}_{2,P^{1/2}}^2\leq\ell^2\norm{y_k-x^*}_2^2$ 
%where the second to last inequality follows 
from $g$ being uniformly $\ell$-Lipschitz continuous and $g(x^*,kh)=\zeros_n$, 
%and the rest of inequalities follows from $\lambda_{\min}(P)I_n\preceq P \preceq \lambda_{\max}(P)I_n$, 
and so 
\begin{align*}
\norm{y_{k+1}-x^*}_{2,P^{1/2}}&\leq(1-2h\bar{\gamma}(kh)+h^2\ell^2)^{1/2}\norm{y_k-x^*}_{2,P^{1/2}},
\end{align*}
and note that the theorem's conditions on the step-size $h$ ensures that $(1-2h\bar{\gamma}(kh)+h^2\ell^2)^{1/2}<1$. 
Expanding the previous expression backwards in time leads to $\norm{y_{k+1}-x^*}_{2,P^{1/2}}\leq (1-2h\bar{\gamma}(kh)+h^2\ell^2)^{1/2}\dots(1-2h\bar{\gamma}((k-m)h)+h^2\ell^2)^{1/2}\norm{y_{k-m}-x^*}_{2,P^{1/2}}$ for $m\geq 0$ and $k-m\geq 0$, 
from which equation~(6) from the main paper follows immediately. This finishes the proof of statement~(i).

We now prove statement~(ii). Note that, using the proof of statement~(i), $\norm{y_{k+1}-x^*(kh)}_{2,P^{1/2}}^2=\norm{y_k-x^*(kh)+hg(y_k,kh)}_{2,P^{1/2}}^2\leq(1-2h\bar{\gamma}(kh)+h^2\ell^2)\norm{y_k-x^*(kh)}_{2,P^{1/2}}$. Then, we obtain 
\begin{align*}
&\norm{y_{k+1}-x^*((k+1)h)}_{2,P^{1/2}}\\
&\leq \norm{y_{k+1}-x^*(kh)}_{2,P^{1/2}}+\norm{x^*((k+1)h)-x^*(kh)}_{2,P^{1/2}}\\
&\leq(1-2h\bar{\gamma}(kh)+h^2\ell^2)^{1/2}\norm{y_{k}-x^*(kh)}_{2,P^{1/2}}+\norm{x^*((k+1)h)-x^*(kh)}_{2,P^{1/2}}.
\end{align*}
Expanding the previous expression backwards in time leads to $\norm{y_{k+1}-x^*((k+1)h)}_{2,P^{1/2}}\leq (1-2h\bar{\gamma}(kh)+h^2\ell^2)^{1/2}\dots(1-2h\bar{\gamma}((k-m)h)+h^2\ell^2)^{1/2}\norm{y_{k-m}-x^*((k-m)h)}_{2,P^{1/2}}+(1-2h\bar{\gamma}(kh)+h^2\ell^2)^{1/2}\dots(1-2h\bar{\gamma}((k-(m-1))h)+h^2\ell^2)^{1/2}\norm{x^*((k-(m-1))h)-x^*((k-m)h)}_{2,P^{1/2}}+\dots+(1-2h\bar{\gamma}(kh)+h^2\ell^2)^{1/2}\norm{x^*(kh)-x^*((k-1)h)}_{2,P^{1/2}}+\norm{x^*((k+1)h)-x^*(kh)}_{2,P^{1/2}}$ for $m\geq 1$, $k-m-1\geq 0$; 
from which equation~(8) from the main paper follows immediately. The proof for the second part of statement~(ii) is very similar to the one done for the second part of statement~(ii) in Theorem~3.1, and thus is omitted. The proof of statement~(iii) is immediately obtained from $h^*=\arg\min_{h>0}-2h\gamma+h^2\ell^2$.
%
%\end{proof}

\section{Proof of Theorem~\ref{th:main-acc}}

%\begin{proof}[Proof of Theorem~\ref{th:main-acc}]
Consider any $x_1,x_2,z_1,z_2\in\R^n$ and let 
$P=\begin{bmatrix}
a&b\\b&c
\end{bmatrix}\otimes I_n$ with $a,c>0$, $b\in\R$; and let $\beta:=\frac{\sqrt{\kappa}-1}{\sqrt{\kappa}+1}$. Then, the ACCONEST flow (equation~(15) from the main paper) is described by: 
$\dot{\bar{x}}_1=\bar{x}_2-\bar{x}_1-\frac{1}{L}\nabla f(\bar{x}_2)$ and 
$\dot{\bar{x}}_2= \beta(\bar{x}_2-\bar{x}_1)-\frac{\beta+1}{L}\nabla f(\bar{x}_2)$. 
After some algebraic work we obtain
\begin{align*}
\eta&:=(\fAcc(x_1,x_2)-\fAcc(z_1,z_2))^\top P 
\begin{bmatrix}
x_1-z_1\\
x_2-z_2
\end{bmatrix}\\
%
%\begin{bmatrix}
%c(x_2-z_2)\\-a(x_2-z_2)-b(\nabla f(x_1)-\nabla f(z_1))
%\end{bmatrix}^\top 
%P
%\begin{bmatrix}
%x_1-z_1\\
%x_2-z_2
%\end{bmatrix}\\
&=(a+b\beta)(x_1-z_1)^\top(x_2-z_2)-(a+b\beta)\norm{x_1-z_1}_2^2+\frac{a+b(\beta+1)}{L}(\nabla f(x_2)-\nabla f(z_2))^\top(x_1-z_1)\\
&\, +(b+c\beta)\norm{x_2-z_2}_2^2-(b+c\beta)(x_1-z_1)^\top(x_2-z_2)-\frac{b+c(\beta+1)}{L}(\nabla f(x_2)-\nabla f(z_2))^\top(x_2-z_2). 
\end{align*}
Now, we use the strong convexity property $-(\nabla f(x_2)-\nabla f(z_2))^\top(x_2-z_2)\leq -\mu\norm{x_2-z_2}_2^2$ and the $L$-smoothness of $f$ as $\norm{\nabla f(x_2)-\nabla f(z_2)}_2^2\leq L\norm{x_2-z_2}_2^2$ in 
the equality $\norm{\sqrt{\frac{\xi}{4L}}(\nabla f(x_2)-\nabla f(z_2))+\sqrt{\frac{L}{\xi}}(x_2-z_2)}_2^2=\frac{\xi}{4L}\norm{\nabla f(x_1)-\nabla f(z_1)}_2^2+(\nabla f(x_1)-\nabla f(z_1))^\top(x_2-z_2)+\frac{L}{\xi}\norm{x_2-z_2}_2^2$, with $\xi>0$, to obtain 
\begin{align*}
\eta&\leq (a+b\beta-(b+c\beta))(x_2-z_2)^\top(x_1-z_1)+\left(\frac{a+b(\beta+1)}{\xi}-a-b\beta\right)\norm{x_1-z_1}_2^2\\
&\quad +\left(\frac{a+b(\beta+1)}{4}\xi+(b+c\beta)-(b+c(\beta+1))\frac{1}{\kappa}\right)\norm{x_2-z_2}_2^2.
\end{align*}
Now, the coefficients multiplying the terms $\norm{x_1-z_1}_2^2$, $\norm{x_2-z_2}_2^2$ and $(x_1-z_1)^\top(x_2-z_2)$ must be less or equal than $-\frac{1}{\sqrt{\kappa}}a$, $-\frac{1}{\sqrt{\kappa}}c$ and $-\frac{2}{\sqrt{\kappa}}b$ respectively, in order 
that the vector field $\fAcc$ satisfies the one-sided Lipschitz condition with constant $-\frac{1}{\sqrt{\kappa}}$, i.e., with $\bar{\gamma}(t)\equiv \frac{1}{\sqrt{\kappa}}$ in the statement~(ii) of Theorem~2.1 and with $\norm{\cdot}_{2,P^{1/2}}$ on the right-hand side of its inequality. Thus, we need to analyze conditions on $a,b,c$ such that the following inequalities hold
%satisfy the following inequalities for the vector field $\fAcc$ to satisfy the one-sided Lipschitz condition with constant $-\frac{1}{\sqrt{\kappa}}$, respectively,
\begin{align}
%\frac{a}{\xi}+\frac{b}{\xi}(\beta+1)-a-b\beta&\leq-\frac{1}{\sqrt{\kappa}}a\label{ineq-1}\\
a\left(\frac{1}{\xi}-1+\frac{1}{\sqrt{\kappa}}\right)+b\left(\frac{\beta+1}{\xi}-\beta\right)&\leq 0\label{ineq-1}\\
%\frac{a\xi}{4}+\frac{b\xi}{4}(\beta+1)+b+c\beta-\frac{b}{\kappa}-\frac{c(1+\beta)}{\kappa}&\leq-\frac{1}{\sqrt{\kappa}}c\label{ineq-2}\\
\frac{a\xi}{4}+b\left(\frac{\xi}{4}(\beta+1)+1-\frac{1}{\kappa}\right)+c\left(\beta+\frac{1}{\sqrt{\kappa}}-\frac{\beta+1}{\kappa}\right)&\leq 0\label{ineq-2}\\
%a+b\beta-b-c\beta&\leq-\frac{2}{\sqrt{\kappa}}b.\label{ineq-3}
a+b\left(\beta-1+\frac{2}{\sqrt{\kappa}}\right)-c\beta&\leq 0.\label{ineq-3}
\end{align}

We first analyze conditions that ensure equation~\eqref{ineq-1} holds. We choose $\xi=\frac{\sqrt{\kappa}}{\sqrt{\kappa}-1}$ to eliminate the dependency on $a$ (recall that $\kappa>1$). With this value of $\xi$, the coefficient multiplying $b$ on the left-hand side of~\eqref{ineq-1} becomes 
$$
\frac{\sqrt{\kappa}-1}{\sqrt{\kappa}}(\beta+1)-\beta=\frac{\sqrt{\kappa}-1}{\sqrt{\kappa}+1}>0;
$$
thus to ensure~\eqref{ineq-1} holds, we choose $b<0$. For simplicity, in what follows in the proof, we do the change of variables $b\rightarrow-b$ so that we can take $b>0$. 

We now analyze conditions that ensure equation~\eqref{ineq-2} holds, which after the change of variables $b\rightarrow-b$ and rearranging some terms becomes 
\begin{equation}
\label{eq:clon-ineq2}
%\frac{a\xi}{4}-b\left(\frac{\xi}{4}(1+\beta)+1-\frac{1}{\kappa}\right)+c\left(\beta+\frac{1}{\sqrt{\kappa}}-\frac{\beta+1}{\kappa}\right)\leq 0.
\frac{\xi}{4}\left(a-b(1+\beta)\right)-b\left(1-\frac{1}{\kappa}\right)+c\left(\beta+\frac{1}{\sqrt{\kappa}}-\frac{\beta+1}{\kappa}\right)\leq 0.
\end{equation}
Now, by establishing the condition 
\begin{equation}
\label{cond2-proof}
a\leq b(1+\beta)
\end{equation}
and observing that $\beta+\frac{1}{\sqrt{\kappa}}-\frac{1+\beta}{\kappa}=\frac{\kappa-1}{\sqrt{\kappa}(\sqrt{\kappa}+1)}=\left(1-\frac{1}{\kappa}\right)\left(\frac{\sqrt{\kappa}}{\sqrt{\kappa}+1}\right)<1-\frac{1}{\kappa}$, we conclude that in order to satisfy inequality~\eqref{eq:clon-ineq2}, we need to ensure:
\begin{equation}
\label{cond3-proof}
-b\left(1-\frac{1}{\kappa}\right)+c\left(1-\frac{1}{\kappa}\right)\left(\frac{\sqrt{\kappa}}{\sqrt{\kappa}+1}\right)\leq 0\quad\Longrightarrow \quad c\leq \frac{\sqrt{\kappa}+1}{\sqrt{\kappa}}b.
\end{equation}

Now, recall that we need $P$ to be positive definite, for which the inequality $ac>b^2$ must hold. Therefore, to ensure $P$ is positive definite, we take $c=\frac{\sqrt{\kappa}+1}{\sqrt{\kappa}}b$, which satisfies~\eqref{cond3-proof}, and choose $a=\gamma\frac{\sqrt{\kappa}}{\sqrt{\kappa}+1}b$ where $\gamma>1$ must be chosen such that~\eqref{cond2-proof} is satisfied.

We now analyze conditions that ensure equation~\eqref{ineq-3} holds, which after the change of variables $b\rightarrow-b$ becomes 
\begin{equation}
\label{eq:clon-ineq3}
a-b\left(\beta - 1+\frac{2}{\sqrt{\kappa}}\right)-c\beta\leq 0.
\end{equation}
Replacing our newly assigned values for $a$ and $c$ in~\eqref{eq:clon-ineq2}, we only need to verify that the following inequality holds
\begin{equation*}
%\label{eq:clon-ineq3o}
\gamma\frac{\sqrt{\kappa}}{\sqrt{\kappa}+1}-\left(\beta-1+\frac{2}{\sqrt{2}}\right)-\frac{\sqrt{\kappa}+1}{\sqrt{\kappa}}\beta\leq 0
\end{equation*}
for an appropriate value of $\gamma>0$. After some algebraic work, we conclude that $\gamma\leq 1+\frac{1}{\kappa}$ satisfies this inequality, besides of defining a value for $a$ which 
%. It is also easy to check that this choice of $a$ 
satisfies inequality~\eqref{cond2-proof}.

Finally, after the change of variables $b\rightarrow-b$, we set $b=1$ and conclude that the ACCONEST flow 
%satisfied all constraints that ensure 
satisfies 
the one-sided Lipschitz condition of Theorem~2.1 
%with respect to norm $\norm{\cdot}_{2,P^{1/2}}$ 
%, $P=\begin{bmatrix}
%  \gamma\frac{\sqrt{\kappa}}{\sqrt{\kappa}+1}&1\\
%  1&\frac{\sqrt{\kappa}+1}{\sqrt{\kappa}}  
%  \end{bmatrix}\otimes I_n$, 
%  $1<\gamma\leq 1+\frac{1}{\kappa}$, 
%and 
with constant $-\frac{1}{\sqrt{\kappa}}$. Thus, contraction of the system in equation (15) from the main paper follows as stipulated in statement~(i). The proof of statement~(ii) follows immediately from Theorem~2.1.
  
%============================== 
  
Now we prove statement~(iii). First, we use the result in statement~(i) of Theorem~3.1 to obtain 
\begin{multline}
\norm{y_k^{(1)}-x^*}_2^2\leq \norm{y_k^{(1)}-x^*}_2^2+\norm{y_k^{(2)}-x^*}_2^2\\
\leq \frac{1}{\lambda_{\max}(P)}\norm{(y_k^{(1)}-x^*,y_k^{(2)}-x^*)^\top}_{2,P^{1/2}}^2\leq(1+h\frac{1}{\sqrt{\kappa}})^{-k}\norm{(y_0^{(1)}-x^*,y_0^{(2)}-x^*)^\top}_{2,P^{1/2}}^2.
\end{multline}
%
%$\norm{y_k^{(1)}-x^*}_{2,P^{1/2}}\leq \sqrt{\norm{y_k^{(1)}-x^*}_{2,P^{1/2}}^2+\norm{y_k^{(2)}-x^*}_{2,P^{1/2}}^2}\leq (1+h\frac{1}{\sqrt{\kappa}})^{-k}\norm{(y_0^{(1)}-x^*,y_0^{(2)}-x^*)^\top}_{2,P^{1/2}}$. 
Using this result, along with 
%$\norm{y_k^{(1)}-x^*}_{2}^2\leq \frac{1}{\lambda_{\min}(P)}\norm{y_k^{(1)}-x^*}_{2,P^{1/2}}^2$ and the inequality 
%
$f(y^{(1)}_k)-f(x^*)\leq \nabla f(x^*)^\top(y^{(1)}_k-x^*)+\frac{L}{2}\norm{y^{(1)}_k-x^*}_2^2=\frac{L}{2}\norm{y^{(1)}_k-x^*}_2^2$ from the $L$-smoothness of $f$, we obtain
$$
f(y^{(1)}_k)-f(x^*)\leq \frac{L}{2\lambda_{\min}(P)}\norm{(y_0^{(1)}-x^*,y_0^{(2)}-x^*)^\top}_{2,P^{1/2}}^2\left(1+h\frac{1}{\sqrt{\kappa}}\right)^{-2k},
$$ 
which concludes the proof of statement~(iii).
%

%============================== 
  
Now we prove statement~(iv). In order to apply Theorem~3.4, we need to find a Lipschitz constant for the vector field $\fAcc$. Thus,
\begin{equation}
\label{eq:up_des_t1}
\begin{aligned}
&\norm{\fAcc(y^{(1)}_k,y^{(2)}_k)}_{2,P^{1/2}}^2\\
&\leq \lambda_{\max}(P)\norm{\fAcc(y^{(1)}_k,y^{(2)}_k)}_2^2\\
&\leq \lambda_{\max}(P)(7+5\beta+6\beta^2)\norm{(y^{(1)}_k-x^*,y^{(2)}_k-x^*)^\top}_2^2\\
&\leq \frac{\lambda_{\max}(P)}{\lambda_{\min}(P)}(7+5\beta+6\beta^2)\norm{(y^{(1)}_k-x^*,y^{(2)}_k-x^*)^\top}_{2,P^{1/2}}^2.
\end{aligned}
\end{equation}
We now explain how we obtain the second inequality in~\eqref{eq:up_des_t1}. Firstly, note that $\norm{\fAcc(y^{(1)}_k,y^{(2)}_k)}_2^2=\norm{y^{(2)}_k-y^{(1)}_k-\frac{1}{L}\nabla f(y^{(2)}_k)}_2^2+\norm{\beta(y^{(2)}_k-y^{(1)}_k)-\frac{1+\beta}{L}\nabla f(y^{(2)}_k)}_2^2$.
Now, $\norm{y^{(2)}_k-y^{(1)}_k-\frac{1}{L}\nabla f(y^{(2)}_k)}_2\leq\norm{y^{(2)}_k-x^*}_2+\norm{y^{(1)}_k-x^*}_2+\frac{1}{L}\norm{\nabla f(y^{(2)}_k)- \nabla f(x^*)}_2\leq 2\norm{y^{(2)}_k-x^*}_2+\norm{y^{(1)}_k-x^*}_2$ by using the triangle inequality and the $L$-smoothness of $f$. Then, $\norm{y^{(2)}_k-y^{(1)}_k-\frac{1}{L}\nabla f(y^{(2)}_k)}_2^2\leq 4\norm{y^{(2)}_k-x^*}_2^2+\norm{y^{(1)}_k-x^*}_2^2+4\norm{y^{(2)}_k-x^*}_2\norm{y^{(1)}_k-x^*}_2\leq 6\norm{y^{(2)}_k-x^*}_2^2+3\norm{y^{(1)}_k-x^*}_2^2$, where the last inequality follows from $a^2+b^2\geq 2ab$ for any $a,b\in\R$. 
Likewise, we can obtain $\norm{\beta(y^{(2)}_k-y^{(1)}_k)-\frac{1+\beta}{L}\nabla f(y^{(2)}_k)}_2^2\leq (1+4\beta+6\beta^2)\norm{y^{(2)}_k-x^*}_2^2+(\beta+3\beta^2)\norm{y^{(1)}_k-x^*}_2^2$. Putting it all together, we obtain the sought inequality: $\norm{\fAcc(y^{(1)}_k,y^{(2)}_k)}_2^2=\norm{y^{(2)}_k-y^{(1)}_k-\frac{1}{L}\nabla f(y^{(2)}_k)}_2^2+\norm{\beta(y^{(2)}_k-y^{(1)}_k)-\frac{1+\beta}{L}\nabla f(y^{(2)}_k)}_2^2\leq (7+5\beta+6\beta^2)\norm{(y^{(1)}_k-x^*,y^{(2)}_k-x^*)^\top}_2^2$.

Then, since the square of the Lipschitz constant is $\frac{\lambda_{\max}(P)}{\lambda_{\min}(P)}(7+5\beta+6\beta^2)$, the optimal integration step-size according to Theorem~3.4 is  $h^*:=\frac{1}{\sqrt{\kappa}(7+5\beta+6\beta^2)}\frac{\lambda_{\min}(P)}{\lambda_{\max}(P)}$, and then we have that
\begin{equation*}
\norm{(y^{(1)}_k-x^*,y^{(2)}_k-x^*)^\top}_{2,P^{1/2}}\leq (1-h^*\sqrt{\frac{\mu}{L}})^{k/2}\norm{(y^{(1)}_0-x^*,y^{(2)}_0-x^*)^\top}_{2,P^{1/2}} %\;\; k=1,2,\dots.
\end{equation*}
for $k=1,2,\dots$. 
Finally, a similar analysis to the proof of statement~(iii) leads to 
$$
f(y^{(1)}_k)-f(x^*)\leq \frac{L}{2\lambda_{\min}(P)}\norm{(y_0^{(1)}-x^*,y_0^{(2)}-x^*)^\top}_{2,P^{1/2}}^2\left(1-h^*\sqrt{\frac{\mu}{L}}\right)^{k},
$$ 
which concludes the proof of statement~(iv).
%
%
%\end{proof}

\section{Proof of Theorem~\ref{th:tv}}

%\begin{proof}[Proof of Theorem~\ref{th:tv}]
The proof of statement~(i) is virtually the same as the one of statement~(i) of Theorem~4.1 due to the assumption $f(\cdot,t)\in\SuL$ for every $t\geq 0$. 
Now we prove statement~(ii). Let us fix any $t\geq 0$ and observe that 
\begin{equation}
\label{aca1}\zeros_n=-\nabla f(x^*(t),t).
\end{equation}
We first show that the curve $t\mapsto x^*(t)$ is continuously differentiable. Define the function
$g:\R^{n+1}\to \R^{n}$ as $g(t,x) = -\nabla f(x,t)$. 
Since $t\mapsto \nabla f(x,t)$ is continuously
differentiable, the function $g$ is continuously differentiable on
$\R^{n+1}$. Moreover, note that 
$\nabla_{x} g(t,x) = -\nabla^2 f(x,t)\preceq-\mu I_n$ which implies that $\nabla_{x} g(t,x)$ 
is Hurwitz and therefore nonsingular. Then, the Implicit Function
\citep[Theorem 2.5.7]{RA-JEM-TSR:88} implies the solutions $t\mapsto x^*(t)$ of the algebraic equation~\eqref{aca1} is continuously differentiable for any $t\geq 0$. 

Now, differentiating equation~\eqref{aca1} with respect to time,
\begin{align*}
&\implies \zeros_{n}=-\nabla^2f(x^*(t),t)\dot{x^*}(t)-\dot{\nabla}f(x^*(t),t)\\
&\implies \zeros_{m}=-\dot{x^*}(t)-(\nabla^2f(x^*(t),t))^{-1}\dot{\nabla}f(x^*(t),t)\\
&\implies \norm{\dot{x^*}(t)}_2=\norm{(\nabla^2f(x^*(t),t))^{-1}\dot{\nabla}f(x^*(t),t)}_2\leq
\norm{(\nabla^2f(x^*(t),t))^{-1}}_2\norm{\dot{\nabla}f(x^*(t),t)}_2\leq
\frac{\rho}{\mu}
\end{align*}
where the last inequality follows from $\mu I_n\preceq\nabla^2 f(x,t)\preceq L I_n\implies \frac{1}{L} I_n\preceq(\nabla^2 f(x,t))^{-1}\preceq \frac{1}{\mu} I_n$ and thus $\norm{(\nabla^2 f(x,t))^{-1}}_2\leq\frac{1}{\mu}$ for every $(t,x)\in[0,\infty)\times{\R^n}$.% (since $(\nabla^2 f(x,t))^{-1}$ is symmetric).

Now, considering the contraction of the system of equation~(20) from the main paper from statement~(i), we set\\ $\delta(t):=\left\|\begin{bmatrix}x_1(t)\\ x_2(t)\end{bmatrix}-\begin{bmatrix}x^*(t)\\ x^*(t)\end{bmatrix}\right\|_{2,P^{1/2}}$ and use~\citep[Lemma~2]{HDN-TLV-KT-JJS:18} to obtain the following differential inequality $\dot{\delta}(t)\leq -\sqrt{\frac{\mu}{L}}\delta(t)+\left\|\begin{bmatrix}\dot{x^*}(t)\\ \dot{x^*}(t)\end{bmatrix}\right\|_{2,P^{1/2}}$. Then, using our previous results, 
$$\dot{\delta}(t)\leq -\sqrt{\frac{\mu}{L}}\delta(t)+\sqrt{2\lambda_{\max}(P)}\norm{\dot{x^*}}_{2}\leq -\sqrt{\frac{\mu}{L}}\delta(t)+\sqrt{2\lambda_{\max}(P)}\frac{\rho}{\mu}.$$ %, where we used our previous results. 
Now, since the function $h(u)=-\sqrt{\frac{\mu}{L}}u+\sqrt{2\lambda_{\max}(P)}\frac{\rho}{\mu}$ is Lipschitz (it is an affine function), we use the Comparison Lemma~\citep{HKK:02} to 
upper bound $\delta(t)$ by the solution to the differential equation $\dot{u}(t)=-\sqrt{\frac{\mu}{L}}u+\sqrt{2\lambda_{\max}(P)}\frac{\rho}{\mu}$ for all $t\geq 0$, from which statement~(ii) follows. 

Now we prove both statements~(iii) and~(iv). First, notice that 
$
\norm{x^*(kh)-x^*((k-1)h)}_2\leq\frac{1}{\mu}\norm{\nabla f(x^*(kh),kh)-\nabla f(x^*((k-1)h),kh)}_2=\frac{1}{\mu}\norm{\nabla f(x^*((k-1)h),kh)}_2=\frac{1}{\mu}\norm{\nabla f(x^*((k-1)h),kh)-\nabla f(x^*((k-1)h),(k-1)h)}_2\leq \frac{\rho}{\mu}
$,
where the first inequality follows from the strong convexity assumption. Using this result, we obtain 
\begin{equation}
\label{eq:auxin1}
\left\|\begin{bmatrix}x^*(kh)\\ x^*(kh)\end{bmatrix}-\begin{bmatrix}x^*((k-1)h)\\ x^*((k-1)h)\end{bmatrix}\right\|_{2,P^{1/2}}=\sqrt{2\lambda_{\max}(P)}\norm{x^*(kh)-x^*((k-1)h)}_2\leq \sqrt{2\lambda_{\max}(P)}\frac{\rho}{\mu}.
\end{equation}
Since the system is contracting from statement~(i), we use~\eqref{eq:auxin1} and statement~(ii) of Theorem~3.1 to conclude the proof of statement~(iii). Finally, the proof of statement~(iv) is very similar: again, since the system is contracting, we use~\eqref{eq:auxin1} and statement~(ii) of Theorem~3.4 to conclude the proof.
%\end{proof}

\end{document}